\documentclass[11pt]{article}
\usepackage[a4paper,margin=1in]{geometry}
\usepackage{amsmath,amssymb,amsthm,mathtools}
\usepackage{mathrsfs}
\usepackage{hyperref}
\hypersetup{hidelinks}
\usepackage{enumitem}
\usepackage{array,tabularx,multirow}

\newcolumntype{C}[1]{>{\centering\arraybackslash}m{#1}}
\newcolumntype{Y}{>{\centering\arraybackslash}X}

\usepackage{xcolor}
\usepackage{tikz}
\usepackage{pgfplots}
\pgfplotsset{compat=1.18}
\usepackage{placeins}
\usetikzlibrary{arrows.meta,calc,patterns,decorations.pathreplacing,positioning}

\newtheorem{definition}{Definition}[section]
\newtheorem{proposition}[definition]{Proposition}
\newtheorem{lemma}[definition]{Lemma}
\newtheorem{theorem}[definition]{Theorem}
\newtheorem{corollary}[definition]{Corollary}
\theoremstyle{remark}

\newcommand{\T}{\mathbb{T}}
\newcommand{\C}{\mathbb{C}}

\newcommand{\rp}{\operatorname{Re}}

\newcommand{\sgn}{\operatorname{sgn}}
\newcommand{\vep}{\varepsilon}
\newcommand{\vphi}{\varphi}

\title{On the Extremal Energy of \\ Complex Unit Gain Dumbbell Graphs}
\author{
Silin Huang \\
\small College of Information Science and Technology, Jinan University, Guangzhou, China \\
\small\texttt{huangsilin@stu.jnu.edu.cn}
\and
Kevin Pereyra\thanks{Corresponding author.} \\
\small Departamento de Matem\'{a}tica, Universidad Nacional de San Luis, San Luis, Argentina \\
\small\texttt{kdpereyra@unsl.edu.ar}
}
\date{}

\begin{document}
\maketitle

\begin{abstract}
We study the extremal energy problem for complex unit gain graphs
whose underlying graph is the dumbbell graph $D_{r,s,\ell}$.
Using switching equivalence,
we reduce the spectrum
to the real parts of the two cycle gains and
obtain an explicit expression of the characteristic polynomial
in terms of matching polynomials of natural subgraphs.
For the bipartite case,
we determine the extremal gain assignments by coefficient comparison.
For the non-bipartite cases,
we analyze the Coulson integral kernels.
Finally, the maximum-energy conditions are determined in all cases,
while the minimum-energy conditions are determined
except when $r$, $s$, and $\ell$ are all odd.
For this remaining case,
we alternatively prove sign restrictions for any improvement over $(0,0)$,
%derive an expression for the pointwise minimum of the Coulson kernel,
%obtain stationarity equations for interior critical points
and prove a Hessian criterion at the origin,
which provides a sufficient condition for $(0,0)$
to fail to be an energy minimizer.
\end{abstract}

\noindent\textbf{MSC 2020 Classification:}
Primary 05C50; Secondary 05C22, 05C35.

\noindent\textbf{Keywords:}
complex unit gain graph;
graph energy;
dumbbell graph;
extremal energy;
matching polynomial

\section{Introduction}
Spectral graph theory studies how spectra of graph matrices
reflect structural properties of graphs.
Over the past few decades,
research in this area has gradually evolved
from simple graphs to more graph families
that encode richer information,
such as signed graphs, mixed graphs, and gain graphs.
For instance, Harary~\cite{Harary1953} pioneered
the study of signed graphs to model polarized relationships.
Zaslavsky~\cite{Zaslavsky1989} further introduced
\textit{gain graphs} as a generalization of signed graphs.
Among them, complex unit gain graphs, also called
\textit{$\mathbb{T}$-gain graphs},
are of great importance because
they provide a common framework for simple, signed,
and mixed graphs~\cite{Samanta2025}.
Their spectral theory was formalized by Reff~\cite{Reff2012}.

\begin{definition}
    A \emph{complex unit gain graph} (or \emph{$\T$-gain graph})
    is a pair $\Phi=\Phi(G)=(G,\vphi)$,
    where $G=G(V,E)$ is a finite simple graph and the \emph{gain function}
    \begin{equation}
        \vphi:\vec{E}=\{(u,v),(v,u):u,v\in V,\,u\sim v\}
        \to\T=\{z\in\C:|z|=1\}
    \end{equation}
    assigns a unit complex number to each oriented edge,
    where $u\sim v$ means that $u$ and $v$ are adjacent.
    The gain function $\vphi$ must satisfy
    \begin{equation}
        \vphi(u,v)=\vphi(v,u)^{-1}\quad\text{for all }u\sim v.
    \end{equation}
\end{definition}

A complex unit gain graph has an associated
\emph{Hermitian adjacency matrix}, which is defined to be
a Hermitian matrix whose off-diagonal entries are
the gains of each oriented edge.
Its characteristic polynomial
is called the \emph{characteristic polynomial}
of the corresponding graph.

\begin{definition}
    The \emph{Hermitian adjacency matrix} of $\Phi$,
    denoted $A(\Phi)$, is a Hermitian complex matrix defined as
    \begin{equation}
        A(\Phi)_{uv}=\begin{cases}
            \vphi(u,v), &(u,v)\in\vec{E}, \\
            0, &\text{otherwise}.
        \end{cases}
    \end{equation}
\end{definition}

Spectral properties of Hermitian adjacency matrices of
complex unit gain graphs have been studied in several settings.
For some fundamental spectral results regarding this matrix,
the reader may refer to~\cite{Mehatari2022}.
Among all spectral invariants,
\emph{graph energy}, which was introduced by Gutman~\cite{Gutman1978}
is of great importance as it links the spectrum of a graph
to its structural properties
and has deep connections with mathematical chemistry.
This concept was originally developed for undirected graphs
and has been generalized to the case of complex unit gain graphs;
see~\cite{Belardo2023} for reference.

\begin{definition}
    The \emph{energy} of $\Phi$ is defined as
    \begin{equation}
        E(\Phi)=\sum_{j=1}^n |\lambda_j|,
    \end{equation}
    where $\lambda_1,\dots,\lambda_n$ are the eigenvalues of $A(\Phi)$,
    which are real since $A(\Phi)$ is Hermitian.
\end{definition}

Let $\mathcal{T}_G$ denote the set
of all complex unit gain graphs
defined on a fixed simple graph $G$.
When $G$ is restricted to a certain family of graphs,
natural problems include establishing bounds
for the energy of graphs in $\mathcal{T}_G$
and identifying the elements of $\mathcal{T}_G$
with minimum or maximum energy.
For instance, in the setting of signed graphs,
Bhat and Pirzada~\cite{Bhat2017}
studied the energy of unicyclic signed graphs
and characterized those with minimum energy.
Bhat et al.~\cite{Bhat2018}
later extended this line of research to bicyclic signed graphs,
identifying the graphs with minimum and second-minimum energy.
Wang and Gao~\cite{Wang2020}
further investigated tricyclic signed graphs
of certain types and characterized the extremal sign assignments
attaining minimum energy.

For complex unit gain graphs,
Samanta and Kannan~\cite{Samanta2021}
derived bounds for the energy of
arbitrary complex unit gain graphs and showed that,
among all gain assignments on a complete bipartite graph,
the balanced gain assignment attains the minimum energy.
Samanta and Rajesh~\cite{Samanta2025}
studied unicyclic complex unit gain graphs
and characterized the cycle gain settings
under which such graphs attain the maximum and minimum energy.

Following this line of research,
in this paper we determine the extremal energy of
complex unit gain graphs whose underlying graph
is the dumbbell graph $D_{r,s,\ell}$,
in all cases except for the minimum-energy problem
when $r,s$, and $\ell$ are all odd.
As a classical class of bicyclic graphs,
dumbbell graphs have been widely used in chemical graph theory.
In particular, they are an extremal candidate graph family
in a variety of extremal problems;
readers may refer to~\cite{Furtula2008}.

The result of this paper is as follows.
First, in the bipartite case,
by coefficient comparison,
we give complete extremal results when both cycles are even.
Second, in the non-bipartite cases,
the energy becomes a two-variable integral problem.
By direct analysis,
the maximum-energy problem when both cycles are odd
and both extremal problems in the mixed-parity case
are completely settled.
The only remaining problem is
the minimum-energy problem when $r$, $s$, and $\ell$ are all odd.
We leave this as an open problem; instead, in that case,
we prove sign restrictions for any improvement over $(0,0)$,
derive an expression for the pointwise minimum of the Coulson kernel,
obtain stationarity equations for interior critical points
and prove a Hessian criterion at the origin,
which provides a sufficient condition for $(0,0)$
to fail to be an energy minimizer.

The content of the paper is organized as follows.
Section~\ref{sec:pre} collects preliminaries
and derives the characteristic polynomial expression
for complex unit gain dumbbell graphs.
Sections~\ref{sec:even-even}, \ref{sec:odd-odd}, and \ref{sec:mixed}
treat the even-even, odd-odd, and mixed-parity cases, respectively.
Section~\ref{sec:conclusion} summarizes the results
and indicates some possible directions for future work.

\section{Preliminaries}\label{sec:pre}
Switching equivalent gain graphs are cospectral,
hence equienergetic.

\begin{definition}
    Two gain graphs $\Phi_1=(G,\vphi_1)$ and $\Phi_2=(G,\vphi_2)$
    are \emph{switching equivalent}
    if there exists a unitary diagonal matrix $U$ such that
    \begin{equation}
        A(\Phi_2)=U A(\Phi_1) U^*.
    \end{equation}
    Here, $(\,\cdot\,)^*$ denotes the conjugate transpose of a matrix.
\end{definition}

\begin{theorem}[Reff~{\cite[Lemma~2.2]{Reff2016}}]
    Two gain graphs $\Phi_1=(G,\vphi_1)$ and $\Phi_2=(G,\vphi_2)$
    are switching equivalent if and only if
    the corresponding cycle gains are equal for every cycle
    $C\subseteq G$ in $\Phi_1$ and $\Phi_2$.
\end{theorem}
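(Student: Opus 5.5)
The forward direction is a direct computation. Suppose $A(\Phi_2)=UA(\Phi_1)U^*$ with $U=\operatorname{diag}(\theta(v))_{v\in V}$ and $\theta(v)\in\T$. Reading off entries gives
\[
\vphi_2(u,v)=\theta(u)\,\vphi_1(u,v)\,\overline{\theta(v)}
\]
for every oriented edge. For a closed walk $C=v_1v_2\cdots v_kv_1$, the cycle gain is the product $\vphi(v_1,v_2)\vphi(v_2,v_3)\cdots\vphi(v_k,v_1)$. In this product each interior factor $\overline{\theta(v_i)}$ meets a following $\theta(v_i)$, and $\theta\overline{\theta}=1$ on $\T$. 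So the product telescopes and the gains of $C$ in $\Phi_1$ and $\Phi_2$ agree. This direction needs nothing beyond the definition.

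For the converse I will build the switching function explicitly. We may treat each connected component separately, so assume $G$ is connected. First fix a spanning tree $T$ and a root $r$. Set $\theta(r)=1$. For any other vertex $v$, define $\theta(v)$ by following the unique $T$-path from $r$ to $v$. We want $\vphi_2(u,w)=\theta(u)\vphi_1(u,w)\overline{\theta(w)}$ on every tree edge. Equivalently, set
\[
\theta(w)=\theta(u)\,\vphi_1(u,w)\,\overline{\vphi_2(u,w)}
\]
when $w$ is the child of $u$. This recursion defines $\theta$ on all of $V$, and the identity holds on $T$ by construction. It holds for both orientations because $\vphi(v,u)=\vphi(u,v)^{-1}=\overline{\vphi(u,v)}$ on $\T$.

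The key step is the non-tree edges. Let $e=(u,w)\notin T$. Then $e$ together with the tree path from $w$ back to $u$ forms the fundamental cycle $C_e$. By hypothesis, $\vphi_1(C_e)=\vphi_2(C_e)$. Replace $\vphi_1$ by the switched gain $\vphi_1^\theta(x,y)=\theta(x)\vphi_1(x,y)\overline{\theta(y)}$. By the forward direction, cycle gains are unchanged, so $\vphi_1^\theta(C_e)=\vphi_1(C_e)=\vphi_2(C_e)$. On every tree edge of $C_e$ we already have $\vphi_1^\theta=\vphi_2$. Cancelling those common factors from the two equal products leaves $\vphi_1^\theta(u,w)=\vphi_2(u,w)$.

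So $\vphi_2=\vphi_1^\theta$ on all edges, which means $A(\Phi_2)=UA(\Phi_1)U^*$ with $U=\operatorname{diag}(\theta)$. The only point needing care is the cancellation argument on fundamental cycles. Everything else is bookkeeping: fixing the orientation convention for cycles, and noting that the hypothesis on all cycles implies it for the fundamental ones.
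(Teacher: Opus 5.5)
Your proof is correct. In the forward direction the entries of $UA(\Phi_1)U^*$ give $\vphi_2(u,v)=\theta(u)\vphi_1(u,v)\overline{\theta(v)}$, and the cycle product telescopes; the two outer factors $\theta(v_1)$ and $\overline{\theta(v_1)}$ also cancel because $\T$ is abelian. In the converse, you switch along a spanning tree so the gains agree on tree edges, and then force agreement on each non-tree edge by cancelling inside its fundamental cycle; this is sound.

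The paper gives no proof of its own here; it simply cites Reff. Your spanning-tree and fundamental-cycle argument is the standard proof of that lemma. It actually shows slightly more, namely that agreement on the fundamental cycles of a single spanning tree already suffices. Its first half also directly yields the normal form with all tree gains equal to $1$, which the paper later obtains by appealing to Samanta and Kannan.
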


A gain graph is said to be \emph{balanced}
if the gain of every cycle is $1$.
If a complex unit gain graph $\Phi(G)$ is balanced,
then it is switching equivalent to the complex unit gain graph
with all edge gains equal to $1$.
After this switching, the adjacency matrix is $A(G)$,
the adjacency matrix of its underlying graph $G$.

A dumbbell graph is a bicyclic graph
consisting of two vertex disjoint cycles joined by a path
and is a typical class of bicyclic graphs~\cite{Wang2010}.

\begin{definition}
    For integers $r,s\ge 3$ and $\ell\ge 1$,
    the \emph{dumbbell graph} $D_{r,s,\ell}$
    is obtained from two vertex disjoint cycles $C_r$ and $C_s$
    by choosing one vertex on each cycle
    and joining them by a path of length $\ell$.
    The order of such a graph is $|V(D_{r,s,\ell})|=r+s+\ell-1$.
\end{definition}

The following equivalent form
of Coulson's integral formula
connects the characteristic polynomial
of a graph's adjacency matrix to the graph's energy.

\begin{theorem}[Mateljevi\'{c} et al.~{\cite[Theorem~1]{Mateljevic2010}}]\label{thm:coulson-pre}
    Let $X(t)$ be a real polynomial of degree $n$
    with leading coefficient $1$
    and let $\lambda_1,\lambda_2,\dots,\lambda_n$ be the roots of $X(t)$.
    Then
    \begin{equation}
        \sum_j\left|\rp(\lambda_j)\right|
        =\frac{1}{\pi}\int_{-\infty}^{\infty}\frac{1}{t^2}
        \log\left|t^n X\left(\frac{i}{t}\right)\right|\,dt.
    \end{equation}
\end{theorem}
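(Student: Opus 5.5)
We prove the identity for a monic linear factor and then sum over roots. Write $X(t)=\prod_{j=1}^n (t-\lambda_j)$. Then
\begin{equation}
t^nX\!\left(\frac{i}{t}\right)=\prod_{j=1}^n (i-\lambda_j t),
\qquad
\log\left|t^nX\!\left(\frac{i}{t}\right)\right|=\sum_{j=1}^n \log|i-\lambda_j t|.
\end{equation}
By linearity of the integral it suffices to show that for every $\lambda=a+ib\in\C$,
\begin{equation}
\frac{1}{\pi}\int_{-\infty}^{\infty}\frac{1}{t^2}\log|i-\lambda t|\,dt=|a|.
\end{equation}
We must first check that each such integral converges absolutely. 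Then the sum may be split into separate integrals, rather than relying on cancellation between factors. The realness of $X$ is not actually needed for this route; it only guarantees that non-real roots come in conjugate pairs.

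To compute the single-factor integral, expand $|i-\lambda t|^2=(at)^2+(1-bt)^2=1-2bt+|\lambda|^2t^2$. The integrand $\frac{1}{2t^2}\log(1-2bt+|\lambda|^2t^2)$ behaves like $-b/t+O(1)$ near $t=0$. The odd part $-b/t$ makes the integral a principal value at the origin. For this reason it is cleaner to symmetrize first: replace $t$ by $-t$ and average. This gives
\begin{equation}
\frac{1}{\pi}\int_0^\infty \frac{1}{2t^2}\log\!\left[(1+|\lambda|^2t^2)^2-4b^2t^2\right]dt .
\end{equation}
This integrand is bounded near $0$ and decays like $\log t/t^2$ at infinity, so it converges absolutely. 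The symmetrized form is exactly the even version of the formula. I will take it as the meaning of the statement, in agreement with the cited source, since the non-symmetric integral only makes sense as a principal value.

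Next we factor
\begin{equation}
(1+|\lambda|^2t^2)^2-4b^2t^2=\bigl(1+(a^2+b^2)t^2-2bt\bigr)\bigl(1+(a^2+b^2)t^2+2bt\bigr),
\end{equation}
and each factor equals $(1\mp bt)^2+a^2t^2$. Substituting $u=1/t$ turns the even integral into
\begin{equation}
\frac{1}{2\pi}\int_{-\infty}^{\infty}\log\frac{(u-b)^2+a^2}{u^2}\,du .
\end{equation}
This follows the standard argument. Differentiating in $a$ gives $\frac{1}{2\pi}\int \frac{2a}{(u-b)^2+a^2}du=\sgn(a)$, which is justified by dominated convergence for $a\neq0$. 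At $a=0$ the integral is $\frac{1}{2\pi}\int\log\frac{(u-b)^2}{u^2}\,du=0$, since the function $u\mapsto\log|u-b|-\log|u|$ is odd about $u=b/2$ and integrable. Integrating the derivative from $0$ yields $|a|$, by continuity in $a$.

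Summing over $j$ gives $\sum_j|\rp\lambda_j|$. The main obstacle is the analytic bookkeeping rather than the algebra. We must justify differentiating under the integral, and continuity at $a=0$, using monotone or dominated convergence on the logarithm. We must also handle the principal-value issue at $t=0$ carefully, so that summing the individual factor integrals is legitimate.
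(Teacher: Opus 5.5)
The paper gives no proof of this statement. It is quoted from Mateljevi\'{c}--Bo\v{z}in--Gutman and used only through the even form in Theorem~\ref{thm:coulson}. Your route is the standard one: factor $t^nX(i/t)=\prod_j(i-\lambda_jt)$, symmetrize in $t$, substitute $u=1/t$, and evaluate the one-root integral as $|\rp\lambda|$. Your algebra is correct, but two analytic claims are false as written.

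\emph{First}, no reinterpretation of the statement is needed, and realness of $X$ is precisely the reason. With $\lambda_j=a_j+ib_j$ one has $\log|t^nX(i/t)|=\tfrac12\sum_j\log(1-2b_jt+|\lambda_j|^2t^2)=-\bigl(\sum_jb_j\bigr)t+O(t^2)$. Since $X$ is real, $\sum_jb_j=\operatorname{Im}\sum_j\lambda_j=0$. Hence $h(t)=t^{-2}\log|t^nX(i/t)|$ has three properties:
\begin{enumerate}[label=\textup{(\roman*)}]
\item it is bounded near $0$ (indeed even, as the paper notes);
\item its only singularities are integrable logarithmic ones at $t=1/c$ for purely imaginary roots $ic$;
\item it is $O(\log|t|/t^2)$ at infinity.
\end{enumerate}
So the integral in the statement is an ordinary absolutely convergent integral, and $\int h=\int\frac{h(t)+h(-t)}{2}\,dt$. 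Splitting this over $j$ is legitimate because each symmetrized one-root integrand is integrable. Your sentence that realness ``is not actually needed'' should be replaced by this observation. For $X(t)=t-i$ the integrand is $\log|1-t|/t^2\sim-1/t$, and the stated integral really does diverge.

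\emph{Second}, the substitution $u=1/t$ moves the principal value from $t=0$ to $u=\pm\infty$. For $b\neq0$, $\log\frac{(u-b)^2+a^2}{u^2}\sim-2b/u$, so your integral over $\mathbb{R}$ (call it $F(a)$) exists only as $\lim_{R\to\infty}\int_{-R}^{R}$. This has three consequences:
\begin{enumerate}[label=\textup{(\roman*)}]
\item $\log|u-b|-\log|u|$ is not integrable, contrary to your claim;
\item oddness about $b/2$ only gives $\lim_R\int_{b/2-R}^{b/2+R}=0$, and you then need that shifting the window costs $o(1)$;
\item differentiating in $a$ under a conditionally convergent integral must be justified on difference quotients, not by dominated convergence on $F$ itself.
\end{enumerate}
The cleanest repair avoids differentiation altogether. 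From the paired form $\int_0^\infty[g_a(u)+g_a(-u)]\,du$ one gets
$F(a)-F(0)=\frac1{2\pi}\int_{\mathbb{R}}\log\bigl(1+\frac{a^2}{(u-b)^2}\bigr)\,du=|a|$, which is absolutely convergent. Also $F(0)=\frac1\pi\int_0^\infty\log|1-b^2/u^2|\,du=0$, because $(u-b)\log|u-b|+(u+b)\log|u+b|-2u\log u$ is an antiderivative tending to $0$ at both ends. With these two fixes your argument is a complete proof.
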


\begin{theorem}\label{thm:coulson}
    Let $\Phi$ be a $\mathbb{T}$-gain graph
    on $n$ vertices with characteristic polynomial $P_{\Phi}(x)$.
    Then
    \begin{equation}\label{eq:coulson}
        E(\Phi)=\frac{1}{\pi}\int_{-\infty}^{+\infty}\frac{1}{t^2}
        \log\left|t^n P_{\Phi}\left(\frac{i}{t}\right)\right|\,dt
        =\frac{1}{\pi}\int_{0}^{+\infty}\frac{1}{t^2}
        \log\left|t^n P_{\Phi}\left(\frac{i}{t}\right)\right|^2\,dt.
    \end{equation}
\end{theorem}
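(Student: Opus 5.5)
The plan is to obtain Theorem~\ref{thm:coulson} from Theorem~\ref{thm:coulson-pre}, applied with $X=P_\Phi$, and then to symmetrize the integrand.

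First, I check the hypotheses of Theorem~\ref{thm:coulson-pre}. Since $A(\Phi)$ is Hermitian, $P_\Phi(x)=\det(xI-A(\Phi))$ is monic of degree $n$. Its coefficients are real: indeed $\overline{P_\Phi(\bar x)}=\det(xI-\overline{A(\Phi)})=\det(xI-A(\Phi)^T)=P_\Phi(x)$, using $\overline{A}=A^T$ for Hermitian $A$. Its roots $\lambda_j$ are the eigenvalues of $A(\Phi)$, which are real, so $|\rp(\lambda_j)|=|\lambda_j|$. Hence the left-hand side of Theorem~\ref{thm:coulson-pre} equals $\sum_j|\lambda_j|=E(\Phi)$, and this gives the first equality directly.

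Second, I show the integrand is even in $t$. Let $f(t)=t^nP_\Phi(i/t)$. Because $P_\Phi$ has real coefficients, $\overline{P_\Phi(z)}=P_\Phi(\bar z)$, so
\[
\overline{f(t)}=t^nP_\Phi(-i/t)=(-1)^n(-t)^nP_\Phi\bigl(i/(-t)\bigr)=(-1)^nf(-t).
\]
Therefore $|f(-t)|=|f(t)|$. Since $1/t^2$ is also even, the integral over $\mathbb{R}$ is twice the integral over $(0,\infty)$. Writing $2\log|f|=\log|f|^2$ then yields the second expression.

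The only delicate point is convergence and behaviour near $t=0$, where the integrand is singular. Explicitly, $f(t)=\prod_j(1-t\lambda_j i)$, so $|f(t)|^2=\prod_j(1+t^2\lambda_j^2)$. Thus $\log|f(t)|^2=O(t^2)$ as $t\to0$, which makes the integrand bounded near $0$. As $t\to\infty$ the logarithm grows only like $\log t$, so the integral converges. The same product formula also gives an independent check: using $\int_0^\infty t^{-2}\log(1+a^2t^2)\,dt=\pi|a|$, the final expression equals $\sum_j|\lambda_j|$ directly, so the symmetrization step is not strictly needed.
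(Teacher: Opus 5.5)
Your proposal is correct and follows the paper's argument. You apply Theorem~\ref{thm:coulson-pre} to the monic, real-coefficient polynomial $P_\Phi$ with real roots, then use $\overline{P_\Phi(i/t)}=P_\Phi(-i/t)$ to show that the integrand is even. One harmless slip: in fact $t^nP_\Phi(i/t)=\prod_j(i-t\lambda_j)=i^n\prod_j(1+it\lambda_j)$, not $\prod_j(1-it\lambda_j)$. Since you only use the modulus $|t^nP_\Phi(i/t)|^2=\prod_j(1+t^2\lambda_j^2)$, your convergence remarks remain valid. So does your direct evaluation via $\int_0^\infty t^{-2}\log(1+a^2t^2)\,dt=\pi|a|$, which on its own proves the second formula without Theorem~\ref{thm:coulson-pre}.
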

\begin{proof}
    This follows directly from Theorem~\ref{thm:coulson-pre}
    because $P_{\Phi}(x)$ has real coefficients.
    Moreover, for every real $t\ne 0$,
    \begin{equation}
        P_{\Phi}\left(-\frac{i}{t}\right)
        =\overline{P_{\Phi}\left(\frac{i}{t}\right)}
        \implies
        \left|t^n P_{\Phi}\left(-\frac{i}{t}\right)\right|
        =\left|t^n P_{\Phi}\left(\frac{i}{t}\right)\right|.
    \end{equation}
    Therefore the integrand in~\eqref{eq:coulson}
    is an even function of $t$, and the second equality follows.
\end{proof}

We also use the relation between matchings
and characteristic polynomials.

\begin{definition}
    Let $G=(V,E)$ be a graph.
    A \emph{$j$-matching} in $G$ is a matching
    consisting of exactly $j$ edges,
    in other words, a set $M\subseteq E$ such that $|M|=j$
    and no two edges in $M$ share a common endpoint.
\end{definition}

\begin{definition}
    For a simple graph $G$ on $n$ vertices,
    let $m(G,j)$ be the number of $j$-matchings of $G$.
    Then the \emph{matching polynomial} of $G$ is defined as
    \begin{equation}\label{eq:matching-polynomial-expression}
        m_G(x)=\sum_{j=0}^{\lfloor n/2\rfloor} (-1)^j m(G,j)x^{n-2j}.
    \end{equation}
\end{definition}

\begin{theorem}[Samanta and Rajesh~{\cite[Theorem~4.3]{Samanta2025}}]\label{thm:cugg-char-poly}
    Let $\Phi=(G,\vphi)$ be a complex unit gain graph,
    let $P_{\Phi}(x)$ be the characteristic polynomial of $\Phi$,
    and let $m_G(x)$ be the matching polynomial of $G$. Then
    \begin{equation}
        P_{\Phi}(x)
        =m_G(x)+\sum_K (-2)^{n(K)}
        \prod_{C\in K}\rp(\vphi(C))\,m_{G-K}(x),
    \end{equation}
    where $n(K)$ is the number of cycle components of $K$,
    the summation is over all nontrivial subgraphs
    $K$ of $G$ which are unions of vertex-disjoint cycles,
    and the product is over all cycles $C$ in $K$.
\end{theorem}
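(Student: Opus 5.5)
The plan is to expand $\det(xI-A(\Phi))$ directly over permutations, in the style of the Sachs coefficient theorem, and then group the terms according to which genuine cycles of $G$ they use. Write $P_\Phi(x)=\sum_{\sigma\in S_n}\sgn(\sigma)\prod_{v}(xI-A(\Phi))_{v\sigma(v)}$. A permutation contributes a nonzero term only if every non-fixed point $v$ satisfies $v\sim\sigma(v)$. Its cycle decomposition therefore splits into three kinds of cycles:
\begin{itemize}
\item fixed points, each contributing a factor $x$ (the diagonal entries of $A(\Phi)$ are zero);
\item transpositions $(u\,v)$ with $u\sim v$;
\item permutation cycles of length $k\ge 3$, which trace an oriented cycle $C$ of $G$.
\end{itemize}
Equivalently, the support of $\sigma$ is a spanning subgraph of $G$ whose components are isolated vertices, edges, and cycles.

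Next I would compute the contribution of each kind of component, treating the factor $-1$ from $-A(\Phi)$ separately from the sign of the cycle.
\begin{itemize}
\item \emph{Transposition.} The product of entries is $(-\vphi(u,v))(-\vphi(v,u))=|\vphi(u,v)|^2=1$, using $\vphi(v,u)=\vphi(u,v)^{-1}$ and $|\vphi|=1$. The sign of the transposition is $-1$, so each edge contributes $-1$.
\item \emph{Cycle of length $k\ge3$.} The product of entries is $(-1)^k\vphi(\vec C)$, where $\vphi(\vec C)$ is the gain of $C$ in the chosen orientation. The sign of a $k$-cycle is $(-1)^{k-1}$, so the contribution is $-\vphi(\vec C)$.
\end{itemize}
Each unoriented cycle $C$ of length at least $3$ arises from exactly two permutation cycles, one for each orientation. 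Their gains are $\vphi(C)$ and $\vphi(C)^{-1}=\overline{\vphi(C)}$. Summing the two orientations gives $-(\vphi(C)+\overline{\vphi(C)})=-2\rp(\vphi(C))$, which is independent of the chosen orientation.

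Then I would organize the sum. Fix the set $K$ of graph cycles (length $\ge3$) used by $\sigma$; $K$ is a union of vertex-disjoint cycles, possibly empty. By the previous step, summing over all orientation choices of the cycles in $K$ produces the factor $\prod_{C\in K}(-2\rp(\vphi(C)))=(-2)^{n(K)}\prod_{C\in K}\rp(\vphi(C))$. The remaining part of $\sigma$ lives on $G-K$ and consists of a $j$-matching together with fixed points. It contributes $(-1)^j x^{|V(G-K)|-2j}$, and summing over all matchings of $G-K$ gives exactly $m_{G-K}(x)$ by \eqref{eq:matching-polynomial-expression}. The term $K=\emptyset$ gives $m_G(x)$, and the nontrivial $K$ give the displayed sum, which proves the formula.

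The main obstacle is the sign bookkeeping: one must check that the factor $(-1)^k$ from $-A$ and the permutation sign $(-1)^{k-1}$ combine to a uniform $-1$ per cycle and per edge. The other point to verify is that the length-$2$ case is handled by the transposition count rather than as a cycle: a $2$-cycle has only one permutation representative, which is why it contributes $-1$ and not $-2$. I would also note that every term is real, consistent with $P_\Phi$ having real coefficients, as used in Theorem~\ref{thm:coulson}.
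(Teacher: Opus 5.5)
The paper gives no proof of this statement; it is quoted from~\cite[Theorem~4.3]{Samanta2025}, so there is no in-paper argument to compare with. Your permutation expansion is a correct and complete self-contained proof.

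The sign bookkeeping is right. For a permutation cycle of length $k\ge 2$, the factor $(-1)^k$ from $-A(\Phi)$ times the sign $(-1)^{k-1}$ gives $-1$. The gain product is $\vphi(u,v)\vphi(v,u)=1$ when $k=2$ and $\vphi(\vec C)$ when $k\ge 3$.

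The combinatorics is also right. Nonzero terms of the determinant correspond bijectively to triples consisting of:
\begin{itemize}
\item a set $K$ of vertex-disjoint cycles of $G$;
\item an orientation of each cycle in $K$;
\item a matching of $G-K$.
\end{itemize}
The weight factorizes over these choices. Summing the two orientations of each cycle gives $-2\rp(\vphi(C))$, and the matching sum reassembles into $m_{G-K}(x)$. When $K$ covers every vertex, only the empty matching survives, so your formula gives $m_{G-K}(x)=1$. This is the convention the paper needs for $m_{P_0}$ when $\ell=1$.

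The more common presentation of this result takes two steps. First, a Sachs-type formula writes the coefficient of $x^{n-i}$ in $P_\Phi$ as a sum over elementary subgraphs $H$ on $i$ vertices, with weight $(-1)^{p(H)}2^{c(H)}\prod_{C\in H}\rp(\vphi(C))$. Second, these elementary subgraphs are grouped by their cycle part $K$, so that the matching parts assemble into $m_{G-K}(x)$. Your expansion does both steps in a single pass. The two-step route gives each coefficient separately, while yours goes straight to the grouped form.

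The paper uses the grouped form directly in Theorem~\ref{thm:dumbbell-char-poly}. It then recovers the coefficients from that form by reindexing in Lemma~\ref{lem:coeff-even-even}, so either route suffices for everything that follows.
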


By the method of switching equivalence,
we can reduce every problem on $\mathcal{T}_{D_{r,s,\ell}}$
to a problem on two variables.

\begin{proposition}
    Let $\Phi=(D_{r,s,\ell},\vphi)$.
    Then $\Phi$ is switching equivalent to a gain graph
    in which every edge of a fixed spanning tree has gain $1$.
    Consequently, the spectrum and the energy of $\Phi$
    depend only on the two cycle gains
    \begin{equation}
        \gamma_r:=\vphi(C_r),\qquad \gamma_s:=\vphi(C_s).
    \end{equation}
    In particular, they depend only on
    \begin{equation}
        a:=\rp(\gamma_r),\qquad b:=\rp(\gamma_s).
    \end{equation}
\end{proposition}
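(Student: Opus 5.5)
We will prove the statement in three steps: first the spanning-tree normalization, then the dependence on the two cycle gains, and finally the reduction to the real parts.

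\emph{Step 1: normalization on a spanning tree.} Fix a spanning tree $T$ of $D_{r,s,\ell}$. Since $D_{r,s,\ell}$ has $r+s+\ell-1$ vertices and $r+s+\ell$ edges, $T$ is obtained by deleting exactly one edge $e_r$ of $C_r$ and one edge $e_s$ of $C_s$. Choose a root $v_0$ of $T$. For each vertex $v$, let $\theta(v)$ be the product of the gains $\vphi$ along the unique $v_0$--$v$ path in $T$, and set $U=\operatorname{diag}(\overline{\theta(v)})$. Then $UA(\Phi)U^*$ is the Hermitian adjacency matrix of the gain function
\begin{equation}
\vphi'(u,v)=\overline{\theta(u)}\,\vphi(u,v)\,\theta(v).
\end{equation}
For a tree edge with $u$ the parent of $v$ we have $\theta(v)=\theta(u)\vphi(u,v)$, so $\vphi'(u,v)=\overline{\theta(u)}\,\vphi(u,v)\,\theta(u)\vphi(u,v)^{-1}=1$. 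The same holds in the other orientation, by the relation $\vphi(v,u)=\vphi(u,v)^{-1}$. This proves the first claim without even invoking Reff's theorem.

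\emph{Step 2: only $\gamma_r,\gamma_s$ remain.} After the normalization, every remaining free gain sits on $e_r$ or $e_s$. The cycle gain of $C_r$ is unchanged under switching, because the diagonal factors telescope around a closed walk; it therefore equals $\vphi'(e_r)$ in the orientation of $C_r$, and likewise for $C_s$. Since $C_r$ and $C_s$ are the only cycles of $D_{r,s,\ell}$, two gain assignments with the same $(\gamma_r,\gamma_s)$ are switching equivalent, by Reff's theorem or directly by the normal form just built. Switching equivalent graphs are cospectral, because $U$ is unitary, and hence equienergetic. So the spectrum and the energy depend only on $(\gamma_r,\gamma_s)$.

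\emph{Step 3: reduction to real parts.} The nontrivial unions of vertex-disjoint cycles in $D_{r,s,\ell}$ are exactly $C_r$, $C_s$ and $C_r\cup C_s$. Theorem~\ref{thm:cugg-char-poly} then gives
\begin{equation}
P_\Phi(x)=m_G(x)-2a\,m_{G-C_r}(x)-2b\,m_{G-C_s}(x)+4ab\,m_{G-C_r-C_s}(x),
\end{equation}
with $G=D_{r,s,\ell}$. This polynomial depends only on $a=\rp(\gamma_r)$ and $b=\rp(\gamma_s)$. Since the spectrum consists of the roots of $P_\Phi$, the spectrum, and hence $E(\Phi)$, depends only on $(a,b)$. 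An alternative route to the same conclusion: conjugating every gain gives $\overline{A}$, which has the same real spectrum, and this accounts for the symmetry $\gamma\mapsto\overline{\gamma}$ on each cycle separately, up to switching.

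The only delicate points are verifying that the diagonal switching really trivializes the tree edges, which is the parent-path construction above, and correctly enumerating the cycle unions. Neither is a serious obstacle.
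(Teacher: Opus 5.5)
Your overall route is the paper's. You normalize the gains on a spanning tree, note that the two non-tree edges then carry exactly $\gamma_r$ and $\gamma_s$, and read off the dependence on $a,b$ from Theorem~\ref{thm:cugg-char-poly}. For the normalization the paper cites Samanta--Kannan's result that any two gain assignments on the same tree are switching equivalent; your Step~1 proves that by hand. Steps~2 and~3 are fine. Step~2 is in fact more explicit than the paper about why the normal form is determined by $(\gamma_r,\gamma_s)$.

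However, the computation in Step~1 is wrong as written. With your recursion $\theta(v)=\theta(u)\vphi(u,v)$ and $U=\operatorname{diag}(\overline{\theta(v)})$ you get
\[
\vphi'(u,v)=\overline{\theta(u)}\,\vphi(u,v)\,\theta(v)=\overline{\theta(u)}\,\theta(u)\,\vphi(u,v)^2=\vphi(u,v)^2,
\]
not $1$. In your displayed chain you silently replaced $\theta(v)$ by $\theta(u)\vphi(u,v)^{-1}$, contradicting the recursion you had just stated. The fix is to drop the conjugation and take $U=\operatorname{diag}(\theta(v))$. Then
\[
\vphi'(u,v)=\theta(u)\vphi(u,v)\overline{\theta(v)}=\theta(u)\vphi(u,v)\overline{\theta(u)}\,\overline{\vphi(u,v)}=1.
\]
Equivalently, define $\theta(v)$ as the product of gains along the path from $v$ back to $v_0$.

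Your closing ``alternative route'' is also inaccurate. Switching preserves each cycle gain exactly, so it cannot send $\gamma_s$ to $\overline{\gamma_s}$ while fixing $\gamma_r$. Global conjugation changes both gains at once. To conjugate one cycle gain alone you need the reflection of that cycle fixing its attachment vertex, which is a graph automorphism (a permutation similarity), not a switching. Your Step~3 already uses the characteristic-polynomial formula exactly as the paper does, so this aside is unnecessary and can simply be dropped.
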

\begin{proof}
    Suppose that we delete one edge from each cycle.
    Then the remaining graph is a spanning tree.
    Since a complex unit gain tree is switching equivalent
    to any other complex unit gain tree
    of the same underlying graph
    (Samanta and Kannan~\cite[Theorem~3.1]{Samanta2024}),
    the only switching invariants left
    are the gains of the two cycles,
    namely $\gamma_r$ and $\gamma_s$.
    The dependence on $a$ and $b$ follows from
    Theorem~\ref{thm:cugg-char-poly}.
\end{proof}

The switching reduction is illustrated in
Figure~\ref{fig:dumbbell-switching}.
After switching, all edges of a fixed spanning tree
may be assigned gain $1$,
and the two remaining cycle invariants
are the gains $\gamma_r$ and $\gamma_s$.

\begin{figure}[htbp]
    \centering
    \begin{tikzpicture}[
        scale=0.9,
        every node/.style={font=\small},
        vertex/.style={circle,draw,fill=white,inner sep=1.35pt},
        treeedge/.style={line width=0.45pt},
        gainedge/.style={line width=0.95pt},
        >=Stealth
    ]
        % left cycle
        \foreach \i/\ang in {0/0,1/60,2/120,3/180,4/240,5/300}{
            \coordinate (L\i) at ({1.05*cos(\ang)},{1.05*sin(\ang)});
        }
        \foreach \i/\j in {0/1,1/2,2/3,3/4,4/5}{\draw[treeedge] (L\i)--(L\j);}
        \draw[gainedge] (L5)--(L0) node[midway,below right=1pt] {$\gamma_r$};
        \foreach \i in {0,...,5}{\node[vertex] at (L\i) {};}
        \node[draw=none] at (0,-1.45) {$C_r$};

        % right cycle
        \begin{scope}[xshift=6.0cm]
            \foreach \i/\ang in {0/180,1/120,2/60,3/0,4/300,5/240}{
                \coordinate (R\i) at ({1.05*cos(\ang)},{1.05*sin(\ang)});
            }
            \foreach \i/\j in {0/1,1/2,2/3,3/4,4/5}{\draw[treeedge] (R\i)--(R\j);}
            \draw[gainedge] (R5)--(R0) node[midway,below left=1pt] {$\gamma_s$};
            \foreach \i in {0,...,5}{\node[vertex] at (R\i) {};}
            \node[draw=none] at (0,-1.45) {$C_s$};
        \end{scope}

        % connecting path
        \coordinate (P1) at (2.15,0);
        \coordinate (P2) at (2.95,0);
        \coordinate (P3) at (4.05,0);
        \coordinate (P4) at (4.85,0);
        \draw[treeedge] (L0)--(P1)--(P2);
        \draw[treeedge,densely dotted] (P2)--(P3);
        \draw[treeedge] (P3)--(P4)--(R0);
        \foreach \P in {P1,P2,P3,P4}{\node[vertex] at (\P) {};}
        \draw[decorate,decoration={brace,amplitude=4pt}] (1.05,0.38)--(4.95,0.38)
            node[midway,above=5pt,draw=none] {path of length $\ell$};
        \node[draw=none] at (3.0,-0.55) {tree gains equal to $1$};
    \end{tikzpicture}
    \caption{Schematic of a switching normal form
    for a complex unit gain dumbbell graph.}
    \label{fig:dumbbell-switching}
\end{figure}
\FloatBarrier

We now specialize Theorem~\ref{thm:cugg-char-poly} to dumbbell graphs.

\begin{theorem}\label{thm:dumbbell-char-poly}
    For the dumbbell graph $D_{r,s,\ell}$,
    let $\Phi$ be a complex unit gain graph on it.
    Then
    \begin{equation}
        P_\Phi(x)=m_{D_{r,s,\ell}}(x)
        -2a\,m_{D_{r,s,\ell}-C_r}(x)
        -2b\,m_{D_{r,s,\ell}-C_s}(x)
        +4ab\,m_{P_{\ell-1}}(x),
    \end{equation}
    where $a=\rp(\gamma_r)$ and $b=\rp(\gamma_s)$
    are the real parts of the two cycle gains.
\end{theorem}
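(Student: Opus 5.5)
We apply Theorem~\ref{thm:cugg-char-poly} directly to $G=D_{r,s,\ell}$. The work is to list all nontrivial subgraphs $K$ of $D_{r,s,\ell}$ that are unions of vertex-disjoint cycles, and to identify $G-K$ in each case.

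First, we claim that $C_r$ and $C_s$ are the only cycles of $D_{r,s,\ell}$. A cycle cannot use an edge of the connecting path: every path edge is a bridge, because deleting it disconnects the two cycles. So every cycle lies inside $C_r\cup C_s$. These two cycles are vertex disjoint, and each is itself a cycle graph whose only cycle is the whole graph. (The cycle space of a bicyclic graph has dimension $2$, and its only nonzero elements that are cycles are $C_r$ and $C_s$, since $C_r\cup C_s$ is disconnected.) Hence the admissible $K$ are exactly $\{C_r\}$, $\{C_s\}$ and $\{C_r,C_s\}$, the last being allowed because the two cycles are vertex disjoint. In these cases $n(K)=1,1,2$, respectively.

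Next we substitute into Theorem~\ref{thm:cugg-char-poly}, using $\rp(\vphi(C_r))=a$ and $\rp(\vphi(C_s))=b$. This gives the terms $(-2)a\,m_{G-C_r}(x)$, $(-2)b\,m_{G-C_s}(x)$ and $(-2)^2ab\,m_{G-C_r-C_s}(x)$. It remains to identify $G-C_r-C_s$. The path of length $\ell$ has $\ell+1$ vertices, and its two endpoints lie on the two cycles. Removing both cycles therefore leaves the $\ell-1$ interior vertices, which induce the path $P_{\ell-1}$ (on $\ell-1$ vertices). When $\ell=1$ this is the empty graph, whose matching polynomial is the constant $1$; we interpret $m_{P_0}(x)=1$.

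One point needs a remark: a cycle gain depends on orientation. Traversing a cycle in the opposite direction replaces $\gamma$ by $\bar\gamma$, but the real part is unchanged, so $a$ and $b$ are well defined. The only step with real content is the claim that no other cycles exist, and the bridge argument settles it. Everything else is bookkeeping, and the main care needed is with the degenerate case $\ell=1$.
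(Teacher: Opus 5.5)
Your proposal is correct and follows the paper's proof: apply Theorem~\ref{thm:cugg-char-poly}, noting that the only admissible unions of vertex-disjoint cycles are $C_r$, $C_s$ and $C_r\cup C_s$. You also fill in details the paper leaves implicit — the bridge argument ruling out other cycles, the identification $D_{r,s,\ell}-C_r-C_s\cong P_{\ell-1}$ including the degenerate case $\ell=1$, and orientation-independence of $\rp(\gamma)$ — and all of these are sound.
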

\begin{proof}
    The nontrivial subgraphs of $G$
    which are unions of vertex-disjoint cycles
    are exactly $C_r$, $C_s$, and $C_r\cup C_s$.
    Applying Theorem~\ref{thm:cugg-char-poly}
    yields the result.
\end{proof}

Theorem~\ref{thm:dumbbell-char-poly}
is the starting point for all later arguments.

\section{The Even-Even Case}\label{sec:even-even}
\subsection{Notation}
Throughout this section,
assume that $r$ and $s$ are even.
For later convenience, write
\begin{equation}
    \vep_r:=(-1)^{r/2},\qquad \vep_s:=(-1)^{s/2},
\end{equation}
and denote
\begin{equation}
    \alpha:=\vep_r\rp(\gamma_r),\qquad \beta:=\vep_s\rp(\gamma_s).
\end{equation}
We remark that $\alpha,\beta\in[-1,1]$.

\subsection{Characteristic Polynomial Coefficients in Simplified Form}
Notice that in this case, $D_{r,s,\ell}$ is bipartite.
The spectrum of every
complex unit gain graph on a bipartite underlying graph
is symmetric about the origin
(Wissing and Van Dam~\cite[Lemma~3.1]{Wissing2025}),
and thus its characteristic polynomial is of the form
\begin{equation}\label{eq:ee-charpoly-expression}
    P_\Phi(x)=x^n+b_2(\Phi)x^{n-2}+b_4(\Phi)x^{n-4}+\cdots.
\end{equation}

For later convenience, denote
\begin{equation}\label{eq:ck-definition}
    c_k(\Phi):=(-1)^k b_{2k}(\Phi).
\end{equation}

\begin{lemma}\label{lem:bipartite-coefficient-comparison}
    Let $\Phi_1$ and $\Phi_2$
    be two complex unit gain graphs on $n$ vertices,
    whose underlying graphs are bipartite.
    Then their characteristic polynomials are
    \begin{equation}\label{eq:bipartite-coeff-comparison-charpoly}
        P_{\Phi_j}(x)
        =\sum_{k=0}^{\lfloor n/2\rfloor}
        (-1)^k c_k(\Phi_j)\,x^{n-2k},
        \qquad j=1,2.
    \end{equation}

    Moreover, if
    \begin{equation}\label{eq:bipartite-coeff-comparison-assumption}
        c_k(\Phi_1)\le c_k(\Phi_2)
        \qquad\text{for all } k=0,1,\dots,\lfloor n/2\rfloor,
    \end{equation}
    then
    \begin{equation}
        E(\Phi_1)\le E(\Phi_2).
    \end{equation}
    Furthermore, if the inequality in~\eqref{eq:bipartite-coeff-comparison-assumption}
    is strict for at least one index $k$, then
    \begin{equation}
        E(\Phi_1)<E(\Phi_2).
    \end{equation}
\end{lemma}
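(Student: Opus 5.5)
The first assertion is only a rewriting of \eqref{eq:ee-charpoly-expression} via \eqref{eq:ck-definition}. For a bipartite underlying graph the spectrum is symmetric about the origin (Wissing and Van Dam), so only the powers $x^{n-2k}$ occur. Writing $b_{2k}=(-1)^k c_k$ gives \eqref{eq:bipartite-coeff-comparison-charpoly} with $c_0=1$. The actual work is the monotonicity statement, and I plan to obtain it from the Coulson formula, Theorem~\ref{thm:coulson}.

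\textbf{Step 1: evaluate the kernel.} Substituting $x=i/t$ gives
\begin{equation*}
t^nP_{\Phi_j}(i/t)=\sum_k(-1)^kc_k(\Phi_j)\,i^{n-2k}t^{2k}
= i^n\sum_k(-1)^kc_k(\Phi_j)(-1)^{k}t^{2k}
= i^n\sum_{k}c_k(\Phi_j)\,t^{2k}.
\end{equation*}
This uses $i^{-2k}=(-1)^k$. Hence
\begin{equation*}
E(\Phi_j)=\frac1\pi\int_{-\infty}^{\infty}\frac{1}{t^2}\log Q_j(t)\,dt,
\qquad Q_j(t):=\sum_k c_k(\Phi_j)t^{2k}.
\end{equation*}

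\textbf{Step 2: positivity of $Q_j$.} The Coulson formula already requires $Q_j(t)=|t^nP_{\Phi_j}(i/t)|$ to be nonnegative, so this mostly needs to be noted. One can see it directly: the nonzero eigenvalues come in pairs $\pm\mu$, so $P_{\Phi_j}(x)=x^{n-2m}\prod_{i=1}^m(x^2-\mu_i^2)$. This gives $Q_j(t)=\prod_i(1+\mu_i^2t^2)\ge1$, and in particular every $c_k(\Phi_j)\ge0$.

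\textbf{Step 3: compare the integrands.} The hypothesis gives $Q_1(t)\le Q_2(t)$ for every real $t$, since the powers $t^{2k}$ are nonnegative. Both sides are positive and $\log$ is increasing, so $\log Q_1(t)\le\log Q_2(t)$ pointwise. Integrating against the positive weight $1/t^2$ gives $E(\Phi_1)\le E(\Phi_2)$. The integrals are finite by Theorem~\ref{thm:coulson}.

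\textbf{Step 4: strictness.} Suppose $c_k(\Phi_1)<c_k(\Phi_2)$ for some $k$. This $k$ must satisfy $k\ge1$, because $c_0=1$ for both graphs. Then $Q_1(t)<Q_2(t)$ for all $t\neq0$. The integrand difference $t^{-2}\bigl(\log Q_2-\log Q_1\bigr)$ is therefore continuous and strictly positive on $\mathbb{R}\setminus\{0\}$, so its integral is strictly positive.

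\textbf{Main obstacle.} The only delicate point is justifying the integrals near $t=0$, where the weight $1/t^2$ blows up. Near $0$ we have $\log Q_j(t)=c_1(\Phi_j)t^2+O(t^4)$, so each integrand is bounded there. Near infinity, $\log Q_j=O(\log|t|)$, which the weight $1/t^2$ makes integrable. Both integrals therefore converge absolutely, and the comparison in Steps 3 and 4 is legitimate.
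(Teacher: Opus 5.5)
Your proof is correct and follows essentially the same route as the paper. Both arguments use the $\pm$ symmetry of the spectrum to factor $P_{\Phi_j}$, which gives $c_k(\Phi_j)\ge 0$ and $|t^nP_{\Phi_j}(i/t)|=\sum_k c_k(\Phi_j)t^{2k}$; the Coulson integrands are then compared pointwise, with strict positivity of $t^{2k}$ giving the strict case. Your convergence remarks near $t=0$ and $t=\infty$ are a harmless addition that the paper leaves implicit.
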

\begin{proof}
    By the preceding discussion,
    the nonzero eigenvalues of $\Phi_j$ occur in pairs
    $\pm \lambda_{j,1},\dots,\pm \lambda_{j,m_j}$,
    and therefore
    \begin{equation}
        P_{\Phi_j}(x)
        =x^{n-2m_j}\prod_{\ell=1}^{m_j}(x^2-\lambda_{j,\ell}^2).
    \end{equation}
    Expanding the product on the right-hand side, we obtain
    \begin{equation}
        c_k(\Phi_j)=
        \begin{cases}
        e_k(\lambda_{j,1}^2,\dots,\lambda_{j,m_j}^2), & 0\le k\le m_j,\\
        0, & m_j<k\le \lfloor n/2\rfloor,
        \end{cases}
    \end{equation}
    where $e_k$ denotes the $k$th elementary symmetric polynomial.

    Now for every $t>0$, by~\eqref{eq:bipartite-coeff-comparison-charpoly},
    \begin{equation}
        t^n P_{\Phi_j}\left(\frac{i}{t}\right)
        =\sum_{k=0}^{\lfloor n/2\rfloor}
        (-1)^k c_k(\Phi_j)\, t^n \left(\frac{i}{t}\right)^{n-2k}
        =i^n \sum_{k=0}^{\lfloor n/2\rfloor} c_k(\Phi_j)\, t^{2k}.
    \end{equation}
    Since all $c_k(\Phi_j)$ are nonnegative, it follows that
    \begin{equation}\label{eq:bipartite-coeff-comparison-abs}
        \left|t^n P_{\Phi_j}\left(\frac{i}{t}\right)\right|
        =\sum_{k=0}^{\lfloor n/2\rfloor} c_k(\Phi_j)\, t^{2k}.
    \end{equation}
    Therefore,~\eqref{eq:bipartite-coeff-comparison-assumption} implies
    \begin{equation}\label{eq:bipartite-coeff-comparison-result}
        \left|t^n P_{\Phi_1}\left(\frac{i}{t}\right)\right|
        \le\left|t^n P_{\Phi_2}\left(\frac{i}{t}\right)\right|
        \qquad\text{for all }t>0.
    \end{equation}
    Applying Theorem~\ref{thm:coulson}
    and using that $\log x$ is increasing on $(0,+\infty)$,
    we obtain
    \begin{equation}
        E(\Phi_1)\le E(\Phi_2).
    \end{equation}

    If $c_{k_0}(\Phi_1)<c_{k_0}(\Phi_2)$ for some $k_0$,
    then by~\eqref{eq:bipartite-coeff-comparison-abs},
    the inequality in~\eqref{eq:bipartite-coeff-comparison-result}
    is strict, because each $t^{2k}$ is positive on $(0,+\infty)$.
    Hence
    \begin{equation}
        E(\Phi_1)<E(\Phi_2).
    \end{equation}
    This completes the proof.
\end{proof}

We now determine the coefficients $c_k(\Phi)$.

\begin{lemma}\label{lem:coeff-even-even}
    For each $k\ge 0$, we have
    \begin{equation}\label{eq:ee-coefficient-expression}
        c_k(\Phi)=M_k-2\alpha R_k-2\beta S_k+4\alpha\beta T_k
        \eqqcolon c_k(\Phi;\alpha,\beta),
    \end{equation}
    where
    \begin{equation}
        \begin{aligned}
            &M_k=m(D_{r,s,\ell},k),\qquad
            &R_k=m(D_{r,s,\ell}-C_r,k-r/2), \\
            &S_k=m(D_{r,s,\ell}-C_s,k-s/2),\qquad
            &T_k=m(P_{\ell-1},k-(r+s)/2)
        \end{aligned}
    \end{equation}
    are real numbers,
    where out-of-range matching indices are interpreted as $0$.
    
    Moreover, it holds that
    \begin{equation}
        R_k\ge 2T_k\qquad\text{and}\qquad S_k\ge 2T_k
    \end{equation}
    for all $k=0,1,\dots,\lfloor n/2\rfloor$.
\end{lemma}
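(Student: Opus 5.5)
The plan has two parts. First I read off the coefficients from Theorem~\ref{thm:dumbbell-char-poly} with careful sign bookkeeping. Then I prove the inequalities $R_k\ge 2T_k$ and $S_k\ge 2T_k$ by an explicit injection of matchings.

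\emph{Coefficient formula.} Write $n=r+s+\ell-1$. The graphs $D_{r,s,\ell}-C_r$, $D_{r,s,\ell}-C_s$ and $P_{\ell-1}=D_{r,s,\ell}-(C_r\cup C_s)$ have $n-r$, $n-s$ and $n-r-s=\ell-1$ vertices, respectively. By \eqref{eq:matching-polynomial-expression}, the coefficient of $x^{n-2k}$ in each summand of Theorem~\ref{thm:dumbbell-char-poly} is as follows:
\begin{itemize}[nosep]
\item $(-1)^kM_k$ in $m_{D_{r,s,\ell}}$;
\item $(-1)^{k-r/2}R_k$ in $m_{D_{r,s,\ell}-C_r}$, with matching index $j=k-r/2$, since $n-r-2j=n-2k$;
\item $(-1)^{k-s/2}S_k$ in $m_{D_{r,s,\ell}-C_s}$, by the same reasoning;
\item $(-1)^{k-(r+s)/2}T_k$ in $m_{P_{\ell-1}}$.
\end{itemize}
Out-of-range indices give zero terms, which matches the stated convention.

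Hence
$$b_{2k}(\Phi)=(-1)^kM_k-2a(-1)^{k}\vep_rR_k-2b(-1)^k\vep_sS_k+4ab(-1)^k\vep_r\vep_sT_k.$$
Multiplying by $(-1)^k$ as in \eqref{eq:ck-definition}, and using $\alpha=\vep_ra$, $\beta=\vep_sb$ and $\vep_r\vep_s ab=\alpha\beta$, gives exactly \eqref{eq:ee-coefficient-expression}. The quantities $M_k,R_k,S_k,T_k$ are nonnegative integers, so in particular they are real.

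\emph{Inequality.} I treat $R_k\ge 2T_k$; the inequality for $S_k$ follows by symmetry. The graph $D_{r,s,\ell}-C_r$ is the cycle $C_s$ with a pendant path on $\ell-1$ vertices attached at the endpoint vertex of $C_s$. For $\ell=1$ there is no pendant path, and $P_0$ is the empty graph. The path vertices together with the path edges form a copy of $P_{\ell-1}$ that is vertex-disjoint from $C_s$. Since $s$ is even, $C_s$ has exactly two perfect matchings $N_1$ and $N_2$, each with $s/2$ edges.

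Given any matching $M$ of $P_{\ell-1}$ of size $k-(r+s)/2$, the sets $M\cup N_1$ and $M\cup N_2$ are matchings of $D_{r,s,\ell}-C_r$ of size $k-r/2$. The map $(M,i)\mapsto M\cup N_i$ is injective: from $M\cup N_i$ one recovers $M$ by intersecting with the path edges, and $N_i$ by intersecting with $E(C_s)$, and $N_1\ne N_2$. Therefore $R_k\ge 2T_k$. If $k-(r+s)/2$ lies outside the valid range, then $T_k=0$ and the inequality is trivial because $R_k\ge 0$.

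\emph{Main obstacle.} The only delicate point is the parity and sign bookkeeping in the first step. One must check that the exponents of $x$ line up, namely that $P_{\ell-1}$ has exactly $n-r-s$ vertices. One must also check that the factors $(-1)^{r/2}$ and $(-1)^{s/2}$ are absorbed precisely into $\alpha$ and $\beta$, including in the product term, where they combine as $\vep_r\vep_s$. The degenerate case $\ell=1$ should be checked separately: there $T_k=\delta_{k,(r+s)/2}$, and the injection still applies with $M=\emptyset$.
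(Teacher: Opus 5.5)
Your proposal is correct and follows essentially the same route as the paper. You read off the coefficient of $x^{n-2k}$ from each of the four matching polynomials in Theorem~\ref{thm:dumbbell-char-poly}, absorb $(-1)^{r/2}$ and $(-1)^{s/2}$ into $\alpha$ and $\beta$ (as $\vep_r\vep_s$ in the product term), and obtain $R_k\ge 2T_k$ and $S_k\ge 2T_k$ by extending each matching of $P_{\ell-1}$ with either of the two perfect matchings of the even cycle. Your injectivity check of $(M,i)\mapsto M\cup N_i$ simply spells out the extension argument that the paper states in one line.
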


\begin{proof}
    To prove~\eqref{eq:ee-coefficient-expression},
    first, by~\eqref{eq:matching-polynomial-expression}, we have
    \begin{equation}
        m_{D_{r,s,\ell}}(x)
        =\sum_{j=0}^{\lfloor n/2\rfloor} (-1)^j m(D_{r,s,\ell},j) x^{n-2j}.
    \end{equation}
    Also,
    \begin{equation}
        m_{D_{r,s,\ell}-C_r}(x)
        =\sum_{j=0}^{\lfloor (n-r)/2\rfloor} (-1)^j m(D_{r,s,\ell}-C_r,j) x^{(n-r)-2j}.
    \end{equation}
    Since $r$ is even, writing $k=j+r/2$ gives
    \begin{equation}
        m_{D_{r,s,\ell}-C_r}(x)
        =\sum_{k=r/2}^{\lfloor (n-r)/2\rfloor+r/2}
        (-1)^{k-r/2} m(D_{r,s,\ell}-C_r,k-r/2) x^{n-2k},
    \end{equation}
    hence
    \begin{equation}
        -2a\,m_{D_{r,s,\ell}-C_r}(x)
        =\sum_{k=r/2}^{\lfloor n/2\rfloor}
        (-1)^k \left[-2a\,(-1)^{r/2}m(D_{r,s,\ell}-C_r,k-r/2)\right] x^{n-2k}.
    \end{equation}
    Recalling that $\alpha=\varepsilon_r a,\,\varepsilon_r=(-1)^{r/2}$
    and $R_k=m(D_{r,s,\ell}-C_r,k-r/2)$, this becomes
    \begin{equation}
        -2a\,m_{D_{r,s,\ell}-C_r}(x)
        =\sum_{k=r/2}^{\lfloor n/2\rfloor}
        (-1)^k \left(-2\alpha R_k\right) x^{n-2k}.
    \end{equation}

    Similarly, we have
    \begin{equation}
        -2b\,m_{D_{r,s,\ell}-C_s}(x)
        =\sum_{k=s/2}^{\lfloor n/2\rfloor}
        (-1)^k \left(-2\beta S_k\right) x^{n-2k}.
    \end{equation}

    For the last term, since $r+s$ is even and
    $|V(P_{\ell-1})|=\ell-1=n-r-s$,
    by setting $k=j+(r+s)/2$, we obtain
    \begin{equation}
        m_{P_{\ell-1}}(x)
        =\sum_{k=(r+s)/2}^{\lfloor(\ell-1)/2\rfloor+(r+s)/2}
        (-1)^{k-(r+s)/2} m(P_{\ell-1},k-(r+s)/2) x^{n-2k}.
    \end{equation}
    Hence
    \begin{equation}
        4ab\,m_{P_{\ell-1}}(x)
        =\sum_{k=(r+s)/2}^{\lfloor n/2\rfloor} (-1)^k
        \left[4ab\,(-1)^{(r+s)/2}m(P_{\ell-1},k-(r+s)/2)\right]x^{n-2k}.
    \end{equation}
    Because $(-1)^{(r+s)/2}=(-1)^{r/2}(-1)^{s/2}=\vep_r\vep_s$,
    $\alpha=\vep_r a,\,\beta=\vep_s b$ and
    $T_k=m(P_{\ell-1},k-(r+s)/2)$, this is
    \begin{equation}
        4ab\,m_{P_{\ell-1}}(x)
        =\sum_{k=(r+s)/2}^{\lfloor n/2\rfloor}
        (-1)^k \left(4\alpha\beta T_k\right) x^{n-2k}.
    \end{equation}

    Finally, by applying Theorem~\ref{thm:dumbbell-char-poly},
    and by substituting in $m(D_{r,s,\ell},k)=M_k$, we get
    \begin{equation}\label{eq:ee-charpoly-expression2}
        P_\Phi(x)
        =\sum_{k=0}^{\lfloor n/2\rfloor}
        (-1)^k \left(M_k-2\alpha R_k-2\beta S_k+4\alpha\beta T_k\right) x^{n-2k}.
    \end{equation}

    Remark that since we interpret out-of-range matching indices as $0$,
    the three sums may be written with the common range
    $0\le k\le\lfloor n/2\rfloor$.
    By comparing~\eqref{eq:ee-charpoly-expression},
   ~\eqref{eq:ck-definition}, and~\eqref{eq:ee-charpoly-expression2},
    it follows that
    \begin{equation}
        c_k(\Phi)=M_k-2\alpha R_k-2\beta S_k+4\alpha\beta T_k,
    \end{equation}
    which is exactly~\eqref{eq:ee-coefficient-expression}.

    Moreover, since $C_s$ is an even cycle,
    it has exactly two perfect matchings.
    Therefore every matching counted by $T_k$
    extends to a matching counted by $R_k$
    by choosing one of those two perfect matchings on $C_s$.
    Hence $R_k\ge 2T_k$.
    Similarly, we also have $S_k\ge 2T_k$.
    This completes the proof.
\end{proof}

\subsection{Extremal Gain Assignments for Maximum and Minimum Energy}
The lemma in the last section
leads to the monotonicity of the graph energy
with respect to the parameters $\alpha$ and $\beta$.

\begin{theorem}\label{thm:ee-extrema}
    Let $\Phi$ be a complex unit gain graph on $D_{r,s,\ell}$
    where $r$ and $s$ are even.
    Then the minimum of $E(\Phi)$ is attained exactly at
    \begin{equation}
        (\alpha,\beta)=(1,1),
    \end{equation}
    and the maximum is attained exactly at
    \begin{equation}
        (\alpha,\beta)=(-1,-1).
    \end{equation}
\end{theorem}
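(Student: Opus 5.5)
The plan is to combine Lemma~\ref{lem:coeff-even-even} with the coefficient comparison of Lemma~\ref{lem:bipartite-coefficient-comparison}. By Lemma~\ref{lem:coeff-even-even}, each coefficient $c_k(\Phi;\alpha,\beta)=M_k-2\alpha R_k-2\beta S_k+4\alpha\beta T_k$ is bilinear in $(\alpha,\beta)\in[-1,1]^2$. Since $\alpha$ ranges over all of $[-1,1]$ as $\gamma_r$ ranges over $\T$, and likewise for $\beta$, the energy is a function of $(\alpha,\beta)$ on the full square. I will show that, for every $k$, this function is coordinatewise nonincreasing and that it is sandwiched between its values at $(1,1)$ and $(-1,-1)$. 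Strictness will then come from a single index $k$ where the inequality is strict.

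\textbf{Monotonicity.} Fix $\beta\in[-1,1]$. Then
\[
\partial_\alpha c_k=-2R_k+4\beta T_k\le -2R_k+4T_k\le 0,
\]
using $R_k\ge 2T_k$. So $c_k$ is nonincreasing in $\alpha$, and symmetrically nonincreasing in $\beta$ since $S_k\ge 2T_k$. Hence for all $(\alpha,\beta)$ and all $k$,
\[
c_k(1,1)\le c_k(1,\beta)\le c_k(\alpha,\beta)\le c_k(-1,\beta)\le c_k(-1,-1).
\]
Lemma~\ref{lem:bipartite-coefficient-comparison} then gives $E(1,1)\le E(\Phi)\le E(-1,-1)$.

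\textbf{Strictness.} Suppose $(\alpha,\beta)\ne(1,1)$, say $\alpha<1$; the case $\beta<1$ is symmetric. I need one index $k$ with $c_k(1,\beta)<c_k(\alpha,\beta)$. We have
\[
c_k(\alpha,\beta)-c_k(1,\beta)=(1-\alpha)(2R_k-4\beta T_k)\ge(1-\alpha)(2R_k-4T_k).
\]
So it suffices to find $k$ with $R_k>2T_k$. I will take $k=r/2$. Then $R_{r/2}=m(D_{r,s,\ell}-C_r,0)=1$, while $T_{r/2}=m(P_{\ell-1},r/2-(r+s)/2)=0$ because the index $-s/2$ is negative. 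This gives strict inequality, so $E(\Phi)>E(1,1)$.

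The maximum is handled the same way. For $(\alpha,\beta)\ne(-1,-1)$, say $\alpha>-1$,
\[
c_k(-1,\beta)-c_k(\alpha,\beta)=(1+\alpha)(2R_k-4\beta T_k),
\]
and at $k=r/2$ this equals $2(1+\alpha)>0$. So $E(\Phi)<E(-1,-1)$. The same index argument with $k=s/2$ handles the case where only $\beta$ differs from the extremal value.

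The one point needing care is that $c_k\ge 0$ throughout, which Lemma~\ref{lem:bipartite-coefficient-comparison} uses. This holds because $c_k$ is an elementary symmetric function of squared eigenvalues of a genuine gain graph. It therefore applies at every $(\alpha,\beta)$ in the square, including the corners, and I expect no real obstacle beyond checking the index bookkeeping $k=r/2\le\lfloor n/2\rfloor$.
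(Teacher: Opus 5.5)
Your proposal is correct and follows essentially the same argument as the paper. Both use coordinatewise monotonicity of the bilinear coefficients $c_k(\alpha,\beta)$, which comes from $R_k\ge 2T_k$ and $S_k\ge 2T_k$; strictness via the index $k=r/2$ (resp.\ $k=s/2$), where $R_{r/2}=1$ and $T_{r/2}=0$; and then the bipartite coefficient-comparison lemma. Your explicit chain $c_k(1,1)\le c_k(1,\beta)<c_k(\alpha,\beta)$ and your check that $c_k\ge 0$ at every point of the square only spell out what the paper leaves implicit.
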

\begin{proof}
    By Lemma~\ref{lem:coeff-even-even}, for each fixed $k$, we have
    \begin{equation}
        c_k(\Phi)=M_k-2\alpha R_k-2\beta S_k+4\alpha\beta T_k.
    \end{equation}
    Hence
    \begin{equation}
        \frac{\partial c_k}{\partial\alpha}
        =-2R_k+4\beta T_k
        \le -2R_k+4T_k \le 0,\qquad (\alpha,\beta)\in[-1,1]^2,
    \end{equation}
    because $R_k\ge 2T_k$.
    Similarly,
    \begin{equation}
        \frac{\partial c_k}{\partial\beta}
        =-2S_k+4\alpha T_k
        \le -2S_k+4T_k \le 0,\qquad (\alpha,\beta)\in[-1,1]^2,
    \end{equation}
    because $S_k\ge 2T_k$.
    Therefore each $c_k$ is non-increasing in each variable, and thus
    \begin{equation}
        c_k(\Phi;1,1) \le c_k(\Phi) \le c_k(\Phi;-1,-1)
    \end{equation}
    for all $k$.

    We now prove the strictness.
    Suppose $\alpha<1$, then at $k=r/2$ we have
    \begin{equation}
        R_{r/2}=1,\qquad T_{r/2}=0,
    \end{equation}
    hence
    \begin{equation}
        c_{r/2}(\Phi;\alpha,\beta)-c_{r/2}(\Phi;1,\beta)=2(1-\alpha)>0.
    \end{equation}
    Suppose $\alpha=1,\,\beta<1$, then at $k=s/2$ we have
    \begin{equation}
        S_{s/2}=1,\qquad T_{s/2}=0,
    \end{equation}
    hence
    \begin{equation}
        c_{s/2}(\Phi;1,\beta)-c_{s/2}(\Phi;1,1)=2(1-\beta)>0.
    \end{equation}
    Thus, unless $(\alpha,\beta)=(1,1)$,
    at least one coefficient satisfies
    \begin{equation}
        c_k(\Phi;1,1)<c_k(\Phi;\alpha,\beta).
    \end{equation}

    Analogously, suppose $\alpha>-1$, then at $k=r/2$ we have
    \begin{equation}
        c_{r/2}(\Phi;-1,\beta)-c_{r/2}(\Phi;\alpha,\beta)=2(\alpha+1)>0,
    \end{equation}
    while if $\alpha=-1,\,\beta>-1$, then at $k=s/2$ we have
    \begin{equation}
        c_{s/2}(\Phi;-1,-1)-c_{s/2}(\Phi;-1,\beta)=2(\beta+1)>0.
    \end{equation}
    Therefore unless $(\alpha,\beta)=(-1,-1)$,
    at least one coefficient satisfies
    \begin{equation}
        c_k(\Phi;\alpha,\beta)<c_k(\Phi;-1,-1).
    \end{equation}

    By Lemma~\ref{lem:bipartite-coefficient-comparison}, we obtain
    \begin{equation}
        E(\Phi;1,1) \le E(\Phi) \le E(\Phi;-1,-1),
    \end{equation}
    and both equalities are attained exactly at
    the corresponding points.
    This completes the proof.
\end{proof}

It is convenient to write down the result as follows.

\begin{corollary}
    Assume that $r$ and $s$ are even.
    \begin{enumerate}[label=\textup{(\alph*)}]
        \item If $r\equiv s\equiv 0\pmod 4$,
        then the minimum is attained exactly at
        $(\gamma_r,\gamma_s)=(1,1)$
        and the maximum exactly at $(-1,-1)$.

        \item If $r\equiv s\equiv 2\pmod 4$,
        then the minimum is attained exactly at
        $(\gamma_r,\gamma_s)=(-1,-1)$
        and the maximum exactly at $(1,1)$.

        \item If $r\equiv 0\pmod 4$ and $s\equiv 2\pmod 4$,
        then the minimum is attained exactly at
        $(\gamma_r,\gamma_s)=(1,-1)$
        and the maximum exactly at $(-1,1)$.

        \item If $r\equiv 2\pmod 4$ and $s\equiv 0\pmod 4$,
        then the minimum is attained exactly at
        $(\gamma_r,\gamma_s)=(-1,1)$
        and the maximum exactly at $(1,-1)$.
    \end{enumerate}
\end{corollary}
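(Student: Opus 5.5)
The corollary is a direct translation of Theorem~\ref{thm:ee-extrema} from the variables $(\alpha,\beta)$ back to the cycle gains $(\gamma_r,\gamma_s)$. We use $\alpha=\vep_r\rp(\gamma_r)$ and $\beta=\vep_s\rp(\gamma_s)$, with $\vep_r=(-1)^{r/2}$ and $\vep_s=(-1)^{s/2}$. So $\vep_r=1$ exactly when $r\equiv 0\pmod 4$, and $\vep_r=-1$ exactly when $r\equiv 2\pmod 4$; the same holds for $s$.

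The key observation is about the extreme values of the real part. For $\gamma\in\T$, $\rp(\gamma)=1$ holds if and only if $\gamma=1$, and $\rp(\gamma)=-1$ holds if and only if $\gamma=-1$, since $|\gamma|=1$ forces the imaginary part to vanish. Hence $\alpha=\pm1$ is equivalent to $\gamma_r=\pm\vep_r$, and $\beta=\pm1$ is equivalent to $\gamma_s=\pm\vep_s$. The condition $(\alpha,\beta)=(1,1)$ is therefore equivalent to $(\gamma_r,\gamma_s)=(\vep_r,\vep_s)$, and $(\alpha,\beta)=(-1,-1)$ is equivalent to $(\gamma_r,\gamma_s)=(-\vep_r,-\vep_s)$. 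This equivalence is what carries over the word ``exactly'' from Theorem~\ref{thm:ee-extrema}: no other gain pair yields the extremal $(\alpha,\beta)$.

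It remains to substitute the four parity combinations. If $r\equiv s\equiv 0\pmod 4$, then $(\vep_r,\vep_s)=(1,1)$, giving (a). If $r\equiv s\equiv 2\pmod 4$, then $(\vep_r,\vep_s)=(-1,-1)$, so the minimum is at $(-1,-1)$ and the maximum at $(1,1)$, giving (b). If $r\equiv 0$ and $s\equiv 2\pmod 4$, then $(\vep_r,\vep_s)=(1,-1)$, giving minimum at $(1,-1)$ and maximum at $(-1,1)$, which is (c). Case (d) is the symmetric one.

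Finally, the ``exactly'' must be read up to switching equivalence. By the switching-reduction proposition, the energy depends only on the cycle gains, so each statement concerns the pair $(\gamma_r,\gamma_s)$. There is no substantive obstacle: the only point requiring care is the uniqueness argument for unit complex numbers with real part $\pm1$.
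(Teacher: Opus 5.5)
Your proposal is correct and is the same argument the paper relies on. The paper states this corollary without a separate proof, as a restatement of Theorem~\ref{thm:ee-extrema} via $\alpha=\vep_r\rp(\gamma_r)$ and $\beta=\vep_s\rp(\gamma_s)$; your observation that $\rp(\gamma)=\pm1$ forces $\gamma=\pm1$ for $\gamma\in\T$ is exactly what justifies carrying over the word ``exactly'' (up to switching equivalence).
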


\section{The Odd-Odd Case}\label{sec:odd-odd}
\subsection{Notation}
Throughout this section, assume that $r$ and $s$ are odd.
For later convenience, write
\begin{equation}
    \delta_r:=(-1)^{(r+1)/2},\qquad \delta_s:=(-1)^{(s+1)/2},
\end{equation}
and denote
\begin{equation}
\alpha:=\delta_r\rp(\gamma_r),\qquad \beta:=\delta_s\rp(\gamma_s).
\end{equation}
Then $\alpha,\beta\in[-1,1]$.

\subsection{Matching Polynomials of Paths and Cycles}
Unlike the even-even case, the graph is non-bipartite here,
hence we must rely on analyzing
the Coulson integral~\eqref{eq:coulson} itself.

We start by establishing some fundamental lemmas.
We introduce the recurrence polynomials
\begin{equation}
    f_{-1}(u)=0,\qquad f_0(u)=1,\qquad
    f_j(u)=f_{j-1}(u)+u f_{j-2}(u)\quad(j=1,2,\dots),
\end{equation}
and
\begin{equation}
    g_j(u)=f_j(u)+u f_{j-2}(u)\quad(j=1,2,\dots).
\end{equation}

The following lemma records their connection with
matching polynomials of paths and cycles.

\begin{lemma}[Gutman~\cite{Gutman1979}]\label{lem:matching-poly-of-paths-and-cycles}
    For paths, the recurrence
    \begin{equation}
        m_{P_j}(x)=x\,m_{P_{j-1}}(x)-m_{P_{j-2}}(x),
        \qquad
        m_{P_0}(x)=1,\quad m_{P_1}(x)=x,
    \end{equation}
    implies, by induction on $j$, that
    \begin{equation}
        t^j m_{P_j}(i/t)=i^j f_j(u).
    \end{equation}
    where $u=t^2$.
    For cycles, also by induction on $j$, one has
    \begin{equation}
        m_{C_j}(x)=m_{P_j}(x)-m_{P_{j-2}}(x),
    \end{equation}
    hence
    \begin{equation}
        t^j m_{C_j}(i/t)
        =i^j\left(f_j(u)+u f_{j-2}(u)\right)
        =i^j g_j(u).
    \end{equation}
\end{lemma}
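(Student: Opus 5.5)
We prove the two identities separately, each by induction on $j$: first the path identity from the stated three-term recurrence, then the cycle identity via the relation $m_{C_j}=m_{P_j}-m_{P_{j-2}}$.

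\emph{Paths.} The recurrence $m_{P_j}(x)=x\,m_{P_{j-1}}(x)-m_{P_{j-2}}(x)$ is the standard edge-deletion recurrence for matchings, applied at an end vertex of $P_j$. A matching of $P_j$ either avoids the pendant edge at the end vertex $v$, giving a matching of $P_{j-1}$ with $v$ left uncovered, which contributes the factor $x$. Or it uses that edge, giving a matching of $P_{j-2}$ with one extra edge, which contributes the sign $-1$. We set $m_{P_{-1}}:=0$ so that the recurrence holds for $j=1$.

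Write $F_j(t):=t^j m_{P_j}(i/t)$. Multiplying the recurrence by $t^j$ and evaluating at $x=i/t$ gives
\begin{equation}
F_j=t\cdot\tfrac{i}{t}\,F_{j-1}-t^2F_{j-2}=iF_{j-1}-uF_{j-2}.
\end{equation}
Assume $F_{j-1}=i^{j-1}f_{j-1}$ and $F_{j-2}=i^{j-2}f_{j-2}$. Then
\begin{equation}
F_j=i^j f_{j-1}-u\,i^{j-2}f_{j-2}=i^j\left(f_{j-1}+uf_{j-2}\right)=i^jf_j,
\end{equation}
because $-i^{j-2}=i^j$. The base cases are $F_0=1=f_0$ and $F_1=t\cdot i/t=i=i f_1$, where $f_1=f_0+uf_{-1}=1$. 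This proves the path identity.

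\emph{Cycles.} First we justify $m_{C_j}=m_{P_j}-m_{P_{j-2}}$ for $j\ge3$ by conditioning on a fixed edge $e=vw$ of $C_j$. A matching of $C_j$ either avoids $e$, in which case it is a matching of $C_j-e\cong P_j$. Or it contains $e$, in which case removing $e$ leaves a matching of $C_j-v-w\cong P_{j-2}$, with one more edge and two fewer vertices; in the matching polynomial this changes the term by exactly the factor $-1$. Hence $m_{C_j}=m_{P_j}-m_{P_{j-2}}$. The paper says "by induction", but this direct combinatorial decomposition suffices.

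Multiplying by $t^j$ and using the path identity,
\begin{equation}
t^jm_{C_j}(i/t)=i^jf_j-t^2\,t^{j-2}m_{P_{j-2}}(i/t)=i^jf_j-u\,i^{j-2}f_{j-2}=i^j\left(f_j+uf_{j-2}\right)=i^jg_j(u),
\end{equation}
again because $-i^{j-2}=i^j$.

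The only step that needs care is the bookkeeping of powers of $i$ and $t$: the factor $t^2$ pulled out in front of the $(j-2)$-indexed term is exactly $u$, and the sign $-1$ is absorbed by $i^{-2}=-1$. Everything else is routine.
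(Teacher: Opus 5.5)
Your proof is correct and follows the same route as the paper: induction on $j$ via the three-term recurrence for paths, then substitution into $m_{C_j}=m_{P_j}-m_{P_{j-2}}$, with the sign absorbed by $i^{-2}=-1$. The only difference is that you justify the cycle relation directly, by splitting matchings according to whether they use a fixed edge of $C_j$, rather than ``by induction'' as the paper phrases it; this is, if anything, the cleaner justification.
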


\subsection{Simplified Coulson Formula}
Now, for fixed $r,s,\ell$, we define polynomials
\begin{equation}\label{eq:oo-four-polynomials}
    \begin{aligned}
        &T(u)=f_{\ell-1}(u), \\
        &R(u)=f_{\ell-1}(u)g_s(u)+u f_{\ell-2}(u)f_{s-1}(u), \\
        &S(u)=f_{\ell-1}(u)g_r(u)+u f_{\ell-2}(u)f_{r-1}(u), \\
        &M(u)=\begin{cases}
            g_r(u)g_s(u)+u f_{r-1}(u)f_{s-1}(u), & \ell=1, \\
            \begin{aligned}
                &f_{\ell-1}(u)g_r(u)g_s(u)
                +u f_{\ell-2}(u)\left(f_{r-1}(u)g_s(u)+f_{s-1}(u)g_r(u)\right)\\
                &\qquad +u^2 f_{\ell-3}(u)f_{r-1}(u)f_{s-1}(u),
            \end{aligned} & \ell\ge 2.
        \end{cases}
    \end{aligned}
\end{equation}

The following lemma is obtained by applying
Theorem~\ref{thm:dumbbell-char-poly} and
Lemma~\ref{lem:matching-poly-of-paths-and-cycles}
to the Coulson integral formula.
It expresses the Coulson integral in terms of the polynomials
defined above.

\begin{lemma}\label{lem:odd-odd-kernel}
    Let $\Phi$ be a complex unit gain graph on $D_{r,s,\ell}$
    where $r$ and $s$ are odd.
    For $t>0$, if we put polynomials
    \begin{equation}
        Z(t)=M(t^2),\qquad W(t)=4t^{r+s}T(t^2),
        \qquad X(t)=2t^r R(t^2),\qquad Y(t)=2t^s S(t^2),
    \end{equation}
    where $M,R,S,T$ are defined in~\eqref{eq:oo-four-polynomials},
    and define
    \begin{equation}\label{eq:oo-kernel}
        K(t;\alpha,\beta)=\left(Z(t)-\alpha\beta W(t)\right)^2
        +\left(\alpha X(t)+\beta Y(t)\right)^2,
    \end{equation}
    then we have
    \begin{equation}\label{eq:oo-coulson-expression}
        E(\Phi)=\frac{1}{\pi}\int_0^{+\infty}
        \frac1{t^2}\log K(t;\alpha,\beta)\,dt.
    \end{equation}

    Moreover, we have
    \begin{equation}\label{eq:RSMT}
        R(u)S(u)-M(u)T(u)=(-1)^{\ell}u^{\ell}f_{r-1}(u)f_{s-1}(u),
    \end{equation}
    and as a result,
    \begin{equation}\label{eq:oo-sign-statement}
        \sgn(X(t)Y(t)-Z(t)W(t))=(-1)^{\ell}.
    \end{equation}
\end{lemma}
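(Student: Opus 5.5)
The plan is to compute the four matching polynomials in Theorem~\ref{thm:dumbbell-char-poly} explicitly in terms of paths and cycles. Then I substitute $x=i/t$ and use Lemma~\ref{lem:matching-poly-of-paths-and-cycles} to separate real and imaginary parts. The main tool is the standard edge-deletion recurrence $m_G(x)=m_{G-e}(x)-m_{G-u-v}(x)$ for $e=uv$, applied to bridges of the dumbbell.

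\textbf{Step 1: the four polynomials.} The graph $D_{r,s,\ell}-C_r$ is $C_s$ with a pendant path on $\ell-1$ vertices, joined by one bridge. Deleting that bridge gives
\[
m_{D-C_r}=m_{P_{\ell-1}}m_{C_s}-m_{P_{\ell-2}}m_{P_{s-1}}.
\]
Multiplying by $t^{n-r}=t^{\ell-1+s}$ and using $t^jm_{P_j}(i/t)=i^jf_j(u)$ and $t^jm_{C_j}(i/t)=i^jg_j(u)$, the second product picks up $t^2=u$ and $i^{-2}=-1$. This yields
\[
t^{n-r}m_{D-C_r}(i/t)=i^{n-r}R(u),
\]
and symmetrically $t^{n-s}m_{D-C_s}(i/t)=i^{n-s}S(u)$. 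Also $t^{\ell-1}m_{P_{\ell-1}}(i/t)=i^{\ell-1}T(u)$ holds directly. For $m_{D_{r,s,\ell}}$ I delete both bridges at the ends of the connecting path when $\ell\ge2$, or the single joining edge when $\ell=1$. The four resulting terms produce exactly the expression for $M(u)$, so $t^nm_D(i/t)=i^nM(u)$.

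\textbf{Step 2: assembling the kernel.} Factoring out $i^n$ gives
\[
i^{-n}t^nP_\Phi(i/t)=M(u)-2a\,i^{-r}t^rR(u)-2b\,i^{-s}t^sS(u)+4ab\,i^{-(r+s)}t^{r+s}T(u).
\]
For odd $r$ we have $i^{-r}=\pm i$, and the sign is absorbed by $\delta_r$, so that $-2a\,i^{-r}=\pm i\cdot 2\alpha$ with a common sign for both cycle terms. Likewise $i^{-(r+s)}=(-1)^{(r+s)/2}=-\delta_r\delta_s$, so the last term is $-\alpha\beta W(t)$. The real part is therefore $Z-\alpha\beta W$ and the imaginary part is $\pm(\alpha X+\beta Y)$, whose squared modulus is $K(t;\alpha,\beta)$. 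The second form of Theorem~\ref{thm:coulson} then gives \eqref{eq:oo-coulson-expression}.

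\textbf{Step 3: the identity and the sign.} I expand $RS-MT$ for $\ell\ge2$. The terms $f_{\ell-1}^2g_rg_s$ cancel, and so do the mixed terms $uf_{\ell-1}f_{\ell-2}(\cdots)$. What remains is
\[
u^2f_{r-1}f_{s-1}\bigl(f_{\ell-2}^2-f_{\ell-1}f_{\ell-3}\bigr).
\]
A Cassini-type identity $f_j^2-f_{j+1}f_{j-1}=(-u)^j$ holds: the cases $j=0,1$ are checked directly, and induction uses the recurrence. Applying it with $j=\ell-2$ gives $(-1)^\ell u^\ell f_{r-1}f_{s-1}$. For $\ell=1$ we have $T=1$, $R=g_s$, $S=g_r$, and direct expansion gives $RS-MT=-uf_{r-1}f_{s-1}$, consistent with the general formula. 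Finally
\[
XY-ZW=4t^{r+s}\bigl(RS-MT\bigr)(t^2),
\]
and all $f_j$ have nonnegative coefficients with constant term $1$, so they are positive for $u>0$. This gives \eqref{eq:oo-sign-statement}.

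The main obstacle I expect is the sign and power bookkeeping in Steps 1--2. One must check that the $\delta$ normalization makes both imaginary contributions carry the same sign, and that the $\alpha\beta$ term enters with a minus sign. The Cassini step is routine once isolated.
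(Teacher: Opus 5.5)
Your proposal is correct and follows essentially the same route as the paper. It decomposes the four matching polynomials into path and cycle pieces, substitutes $x=i/t$ using $t^jm_{P_j}(i/t)=i^jf_j(u)$ and $t^jm_{C_j}(i/t)=i^jg_j(u)$, and reduces $RS-MT$ to the Cassini identity $f_{\ell-2}^2-f_{\ell-1}f_{\ell-3}=(-u)^{\ell-2}$. The differences are minor: you use the bridge-deletion recurrence instead of Gutman's coalescence identity, prove Cassini by induction instead of citing it, and check $\ell=1$ directly, a case the paper's expansion (written for $\ell\ge 2$) leaves implicit.
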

\begin{proof}
    For~\eqref{eq:oo-coulson-expression} and~\eqref{eq:oo-kernel},
    recalling the Coulson formula in the form
    \begin{equation}
        E(\Phi)=\frac1\pi\int_0^{+\infty} \frac{1}{t^2}
        \log\left|t^nP_\Phi(i/t)\right|^2\,dt,
    \end{equation}
    the goal now is to compute $t^n P_\Phi(i/t)$ explicitly.

    We compute the four matching polynomials
    required for Theorem~\ref{thm:dumbbell-char-poly}.
    Write $u=t^2$.
    A standard identity for the matching polynomial
    of a coalescence graph (Gutman~\cite[Eq.~(3)]{Gutman1986})
    gives
    \begin{equation}
        m_{G\cdot H}(x)
        =m_G(x)m_{H-v}(x)+m_{G-u}(x)m_H(x)-x\,m_{G-u}(x)m_{H-v}(x),
    \end{equation}
    where $G\cdot H$ is obtained by identifying $u\in V(G)$ with $v\in V(H)$.
    Applying this to an endvertex of $P_\ell$
    and a vertex of $C_s$ gives
    \begin{equation}\label{eq:oo-example}
        m_{D_{r,s,\ell}-C_r}(x)
        =m_{P_{\ell-1}}(x)m_{C_s}(x)-m_{P_{\ell-2}}(x)m_{P_{s-1}}(x).
    \end{equation}
    Similarly,
    \begin{equation}
        m_{D_{r,s,\ell}-C_s}(x)
        =m_{P_{\ell-1}}(x)m_{C_r}(x)-m_{P_{\ell-2}}(x)m_{P_{r-1}}(x).
    \end{equation}
    Let $Q_s$ be the graph obtained by
    identifying one end of $P_{\ell+1}$
    with a chosen vertex of $C_s$,
    and let $w$ be the other end of this path.
    Then $D_{r,s,\ell}$ is obtained by
    identifying $w$ with the distinguished vertex of $C_r$.
    Applying the coalescence identity to $C_r$ and $Q_s$,
    and using Lemma~\ref{lem:matching-poly-of-paths-and-cycles},
    yields, for $\ell\ge2$,
    \begin{equation}\label{eq:oo-dumbbell-matching-poly}
        \begin{aligned}
        m_{D_{r,s,\ell}}(x)
        ={}&m_{P_{\ell-1}}(x)m_{C_r}(x)m_{C_s}(x) \\
        &-m_{P_{\ell-2}}(x)\left(m_{P_{r-1}}(x)m_{C_s}(x)+m_{P_{s-1}}(x)m_{C_r}(x)\right) \\
        &+m_{P_{\ell-3}}(x)m_{P_{r-1}}(x)m_{P_{s-1}}(x).
        \end{aligned}
    \end{equation}

    Now we substitute $x=i/t$ in these expressions.
    Take~\eqref{eq:oo-example} for example.
    We have
    \begin{equation}
        \begin{aligned}
            t^{s+\ell-1}m_{D_{r,s,\ell}-C_r}(i/t)
            ={}&t^{s+\ell-1}m_{P_{\ell-1}}(i/t)m_{C_s}(i/t)
            -t^{s+\ell-1}m_{P_{\ell-2}}(i/t)m_{P_{s-1}}(i/t) \\
            ={}&\left(t^{\ell-1}m_{P_{\ell-1}}(i/t)\right)
            \left(t^s m_{C_s}(i/t)\right)
            -t^2\left(t^{\ell-2}m_{P_{\ell-2}}(i/t)\right)
            \left(t^{s-1}m_{P_{s-1}}(i/t)\right) \\
            ={}&i^{\ell-1}f_{\ell-1}(u)\cdot i^s g_s(u)
            -u\,i^{\ell-2}f_{\ell-2}(u)\cdot i^{s-1}f_{s-1}(u) \\
            ={}&i^{s+\ell-1}f_{\ell-1}(u)g_s(u)
            -u\,i^{s+\ell-3}f_{\ell-2}(u)f_{s-1}(u).
        \end{aligned}
    \end{equation}
    Since
    \begin{equation}
        i^{s+\ell-3}=i^{s+\ell-1}i^{-2}=-i^{s+\ell-1},
    \end{equation}
    we obtain
    \begin{equation}
        t^{s+\ell-1}m_{D_{r,s,\ell}-C_r}(i/t)
        =i^{s+\ell-1}\left(f_{\ell-1}(u)g_s(u)+u f_{\ell-2}(u)f_{s-1}(u)\right)
        =i^{s+\ell-1}R(u).
    \end{equation}
    Similarly, we can also obtain
    \begin{equation}
        t^{r+\ell-1}m_{D_{r,s,\ell}-C_s}(i/t)=i^{r+\ell-1}S(u).
    \end{equation}
    Furthermore,
    by~\eqref{eq:oo-dumbbell-matching-poly},~\eqref{eq:oo-four-polynomials}
    and Lemma~\ref{lem:matching-poly-of-paths-and-cycles}, we have
    \begin{equation}
        t^{\ell-1}m_{P_{\ell-1}}(i/t)=i^{\ell-1}T(u),\qquad
        t^n m_{D_{r,s,\ell}}(i/t)=i^n M(u).
    \end{equation}
    Consequently, since $n=r+s+\ell-1$, we finally have
    \begin{equation}
        \begin{aligned}
            &t^n m_{D_{r,s,\ell}-C_r}(i/t)=t^r i^{s+\ell-1}R(u), \qquad
            t^n m_{D_{r,s,\ell}-C_s}(i/t)=t^s i^{r+\ell-1}S(u),\\
            &t^n m_{P_{\ell-1}}(i/t)=t^{r+s} i^{\ell-1}T(u), \qquad
            t^n m_{D_{r,s,\ell}}(i/t)=i^n M(u).
        \end{aligned}
    \end{equation}

    Substitute the above into
    Theorem~\ref{thm:dumbbell-char-poly}. Since
    \begin{equation}
        i^{-r}=(-1)^{(r+1)/2}i=\delta_r i,\quad
        i^{-s}=(-1)^{(s+1)/2}i=\delta_s i,\quad
        i^{-(r+s)}=-\delta_r\delta_s,
    \end{equation}
    we obtain
    \begin{equation}
        \begin{aligned}
            t^n P_\Phi(i/t)
            ={}&i^n M(u)-2a\,t^r i^{s+\ell-1}R(u)
            -2b\,t^s i^{r+\ell-1}S(u)+4ab\,t^{r+s}i^{\ell-1}T(u) \\
            ={}&i^n\left(
            M(u)-i\,(2\delta_r a\,t^r R(u))-i\,(2\delta_s b\,t^s S(u))
            -(4\delta_r\delta_s ab\,t^{r+s}T(u))
            \right).
        \end{aligned}
    \end{equation}
    Using
    \begin{equation}
        \alpha=\delta_r a,\qquad\beta=\delta_s b
    \end{equation}
    together with
    \begin{equation}
        X(t)=2t^r R(u),\qquad
        Y(t)=2t^s S(u),\qquad
        W(t)=4t^{r+s} T(u),\qquad
        Z(t)=M(u),
    \end{equation}
    this simplifies to
    \begin{equation}
        t^nP_\Phi(i/t)
        =i^n\left(Z(t)-\alpha\beta W(t)-i(\alpha X(t)+\beta Y(t))\right).
    \end{equation}
    Therefore
    \begin{equation}
        \left|t^nP_\Phi(i/t)\right|^2
        =\left(Z(t)-\alpha\beta W(t)\right)^2+\left(\alpha X(t)+\beta Y(t)\right)^2
    \end{equation}
    and thus
    \begin{equation}
        E(\Phi)=\frac{1}{\pi}\int_0^{+\infty}\frac{1}{t^2}
        \log\left[\left(Z(t)-\alpha\beta W(t)\right)^2
        +\left(\alpha X(t)+\beta Y(t)\right)^2\right]\,dt,
    \end{equation}
    which is exactly~\eqref{eq:oo-coulson-expression}
    and~\eqref{eq:oo-kernel}.
    
    To prove~\eqref{eq:RSMT}, by~\eqref{eq:oo-four-polynomials},
    a direct expansion shows that
    \begin{equation}
        R(u)S(u)-M(u)T(u)
        =u^2\left(f_{\ell-2}(u)^2-f_{\ell-1}(u)f_{\ell-3}(u)\right)
        f_{r-1}(u)f_{s-1}(u).
    \end{equation}
    Applying the $q$-Euler-Cassini
    formula~\cite[Theorem~2.1]{Cigler2003}
    \begin{equation}
        f_m(u)^2-f_{m+1}(u)f_{m-1}(u)=(-1)^m u^m
    \end{equation}
    with $m=\ell-2$, we obtain
    \begin{equation}
        f_{\ell-2}(u)^2-f_{\ell-1}(u)f_{\ell-3}(u)
        =(-1)^{\ell-2}u^{\ell-2}
        =(-1)^\ell u^{\ell-2}.
    \end{equation}
    Therefore,
    \begin{equation}
        R(u)S(u)-M(u)T(u)
        =(-1)^\ell u^\ell f_{r-1}(u)f_{s-1}(u),
    \end{equation}
    which is exactly~\eqref{eq:RSMT}.
   ~\eqref{eq:oo-sign-statement} then follows because
    \begin{equation}
        X(t) Y(t)-Z(t) W(t)=4t^{r+s}\left(R(u)S(u)-M(u)T(u)\right)
        =(-1)^{\ell}\underbrace{4t^{r+s}\left(u^{\ell}f_{r-1}(u)f_{s-1}(u)\right)}_{\ge 0}.
    \end{equation}
    This completes the proof.
\end{proof}

\subsection{Extremal Gain Assignments for the Maximum Energy}
We now settle the maximum energy problem.

\begin{theorem}\label{thm:odd-odd-maximum}
    Assume that $r$ and $s$ are odd.
    \begin{enumerate}[label=\textup{(\alph*)}]
        \item If $\ell$ is even,
        then the maximum of $E(\Phi)$ is attained exactly at
        \begin{equation}
            (\alpha,\beta)=(1,1)\quad\text{or}\quad
            (\alpha,\beta)=(-1,-1).
        \end{equation}

        \item If $\ell$ is odd,
        then the maximum of $E(\Phi)$ is attained exactly at
        \begin{equation}
            (\alpha,\beta)=(1,-1)\quad\text{or}\quad
            (\alpha,\beta)=(-1,1).
        \end{equation}
    \end{enumerate}
\end{theorem}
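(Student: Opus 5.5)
The plan is to maximize the Coulson kernel $K(t;\alpha,\beta)$ of Lemma~\ref{lem:odd-odd-kernel} pointwise in $t$, and then integrate. Expanding~\eqref{eq:oo-kernel} and writing $p=\alpha\beta$ gives
\begin{equation*}
    K(t;\alpha,\beta)=Z(t)^2+\alpha^2X(t)^2+\beta^2Y(t)^2+p^2W(t)^2+2p\bigl(X(t)Y(t)-Z(t)W(t)\bigr).
\end{equation*}
On $[-1,1]^2$ we have $\alpha^2,\beta^2,p^2\le 1$ and $|p|\le 1$. Each of the three quadratic terms is therefore bounded by its value at $|\alpha|=|\beta|=1$. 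By~\eqref{eq:oo-sign-statement}, the cross term satisfies
\begin{equation*}
    2p\bigl(XY-ZW\bigr)\le 2\,|XY-ZW|,
\end{equation*}
with equality when $p=(-1)^\ell$ (and $XY\ne ZW$).

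Combining these bounds gives, for every $t>0$,
\begin{equation*}
    K(t;\alpha,\beta)\le Z^2+X^2+Y^2+W^2+2|XY-ZW|.
\end{equation*}
The right-hand side equals $K(t;1,1)=K(t;-1,-1)$ when $\ell$ is even, and $K(t;1,-1)=K(t;-1,1)$ when $\ell$ is odd; note that $K$ is invariant under $(\alpha,\beta)\mapsto(-\alpha,-\beta)$. Since $\log$ is increasing and $1/t^2>0$, formula~\eqref{eq:oo-coulson-expression} gives $E(\Phi)\le$ the energy at the claimed points. The energies at the two claimed points coincide because their kernels coincide.

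For exactness, take $(\alpha,\beta)$ not among the claimed pair and show that the deficit
\begin{equation*}
    (1-\alpha^2)X^2+(1-\beta^2)Y^2+(1-p^2)W^2+2\bigl((-1)^\ell-p\bigr)(XY-ZW)
\end{equation*}
is positive for all but finitely many $t>0$. Each summand is nonnegative, so it suffices that one summand is a nonzero polynomial times a positive constant. I split into two cases.
\begin{enumerate}[label=\textup{(\roman*)}]
    \item If $\alpha^2<1$, use the summand $(1-\alpha^2)X^2$. Here $X(t)=2t^rR(t^2)$ and $R(0)=f_{\ell-1}(0)g_s(0)=1$, so $X$ is a nonzero polynomial. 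The case $\beta^2<1$ is symmetric, using $Y$.
    \item If $|\alpha|=|\beta|=1$ but $p=-(-1)^\ell$, the deficit is $4|XY-ZW|$. By~\eqref{eq:RSMT} this equals $16\,t^{r+s}u^\ell f_{r-1}(u)f_{s-1}(u)$, which is strictly positive for $t>0$ because the $f_j$ have nonnegative coefficients with constant term $1$.
\end{enumerate}
In either case the integrand of~\eqref{eq:oo-coulson-expression} is strictly smaller than at the maximizer on a set of positive measure, so the energy is strictly smaller.

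The main obstacle I expect is a technical one: justifying strict inequality of the integrals. This requires $K(t;\alpha,\beta)>0$ almost everywhere, so that $\log K$ is integrable and the strict pointwise inequality transfers to the integrals. I would handle this by noting the following. $K$ is a polynomial in $t$ with $K(0)=Z(0)^2=M(0)^2=1$, hence it is not identically zero and has only finitely many zeros. The integral is finite because it equals the energy by Theorem~\ref{thm:coulson-pre}. Strict monotonicity of $\log$ then gives strict inequality on a set of positive measure.
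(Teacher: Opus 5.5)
Your argument is correct, and it reaches the corners by a different mechanism than the paper.

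The paper fixes $t$ and notes that $K(t;\alpha,\beta)$ is a strictly convex quadratic in $\alpha$ for fixed $\beta$, with leading coefficient $X^2+\beta^2W^2>0$, and likewise in $\beta$. So its maximum over $[-1,1]^2$ can sit only at the four corners. The paper then compares the corners via $K(t;1,1)-K(t;1,-1)=4(XY-ZW)$ and the sign lemma.

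You instead expand $K$ in $\alpha^2$, $\beta^2$, $(\alpha\beta)^2$ and $\alpha\beta$, and observe that every term peaks simultaneously at the same pair of corners. This gives an explicit nonnegative decomposition of the deficit $K_{\max}-K$.

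Your route buys a concrete certificate. Uniqueness reduces to checking which summand of the deficit is nonzero, and you make explicit the passage from a pointwise to an integrated strict inequality, which the paper leaves tacit. The paper's route is shorter and does not need all terms to peak at a common corner, so it would adapt to kernels where termwise bounds are not simultaneously sharp.

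Two simplifications are available to you:
\begin{itemize}
\item Since $R$ and $S$ have nonnegative coefficients and constant term $1$, you have $X(t)\ge 2t^r$ and $Y(t)\ge 2t^s$. Your deficit is therefore positive for every $t>0$, not merely off a finite set.
\item $K(t;\alpha,\beta)=\prod_j(1+\lambda_j^2t^2)\ge 1$, which settles positivity and integrability of $\log K$ in one line.
\end{itemize}
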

\begin{proof}
    By Lemma~\ref{lem:odd-odd-kernel},
    \begin{equation}
        K(t;\alpha,\beta)=\left(Z(t)-\alpha\beta W(t)\right)^2
        +\left(\alpha X(t)+\beta Y(t)\right)^2.
    \end{equation}
    For fixed $\beta$, $K(t;\alpha,\beta)$
    is a strictly convex quadratic in $\alpha$,
    with quadratic coefficient $X(t)^2+\beta^2 W(t)^2>0$.
    For fixed $\alpha$, it is a strictly convex quadratic in $\beta$,
    with quadratic coefficient $Y(t)^2+\alpha^2 W(t)^2>0$.
    Hence any maximum on the square $[-1,1]^2$
    must occur at one of the four corner points $(\pm 1,\pm 1)$.

    At the corner points we have
    \begin{equation}
        K(t;1,1)-K(t;1,-1)=4\left(X(t)Y(t)-Z(t)W(t)\right).
    \end{equation}
    By Lemma~\ref{lem:odd-odd-kernel},
    \begin{equation}
        \sgn\left(X(t)Y(t)-Z(t)W(t)\right)=(-1)^\ell.
    \end{equation}
    Hence, if $\ell$ is even, then
    \begin{equation}
        K(t;-1,-1)=K(t;1,1)>K(t;1,-1)=K(t;-1,1),
    \end{equation}
    whereas if $\ell$ is odd, then
    \begin{equation}
        K(t;-1,-1)=K(t;1,1)<K(t;1,-1)=K(t;-1,1).
    \end{equation}
    This completes the proof.
\end{proof}

\subsection{Extremal Gain Assignments for the Minimum Energy}
We now turn to the minimum energy problem.

\begin{theorem}\label{thm:oo-even-ell-min}
    Assume that $r$ and $s$ are odd \textbf{and that $\ell$ is even}.
    Then the minimum of $E(\Phi)$ is attained exactly at
    \begin{equation}
        (\alpha,\beta)=(0,0).
    \end{equation}
\end{theorem}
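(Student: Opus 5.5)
We will show that $K(t;\alpha,\beta)\ge K(t;0,0)=Z(t)^2$ for every $t>0$ and every $(\alpha,\beta)\in[-1,1]^2$, with strict inequality on a set of positive measure whenever $(\alpha,\beta)\neq(0,0)$. The theorem then follows from~\eqref{eq:oo-coulson-expression}, because $\log$ is increasing.

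\textbf{Expanding the kernel.} By~\eqref{eq:oo-kernel},
\begin{equation}
K(t;\alpha,\beta)-Z(t)^2
=\alpha^2X^2+\beta^2Y^2+\alpha^2\beta^2W^2
+2\alpha\beta\left(XY-ZW\right),
\end{equation}
where every function is evaluated at $t$. The key input is Lemma~\ref{lem:odd-odd-kernel}. Since $\ell$ is even, \eqref{eq:oo-sign-statement} gives $XY-ZW\ge 0$. Hence when $\alpha\beta\ge 0$ every term above is nonnegative, and the inequality is immediate.

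\textbf{The case $\alpha\beta<0$.} This is the main obstacle. Here the cross term $2\alpha\beta(XY-ZW)$ is negative. The two pieces $\alpha^2X^2+\beta^2Y^2\ge 2|\alpha\beta||XY|$ and $\alpha^2\beta^2W^2$ must absorb it, so we need
\begin{equation}
2|\alpha\beta|\,|XY|+\alpha^2\beta^2W^2\ \ge\ 2|\alpha\beta|\,(XY-ZW).
\end{equation}
It suffices to show $|XY|\ge XY-ZW$, which holds whenever $ZW\ge 0$.

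To get $ZW\ge 0$, note that $W=4t^{r+s}f_{\ell-1}(t^2)$. The polynomials $f_j$ have nonnegative coefficients (they count matchings of paths), so $W\ge 0$. For $Z=M(t^2)$ with $\ell\ge 2$, the definition~\eqref{eq:oo-four-polynomials} writes $M$ as a sum of products of $f$'s and $g$'s. Since $g_j=f_j+uf_{j-2}$ also has nonnegative coefficients, $M$ has nonnegative coefficients and $Z>0$. Then $XY-ZW\le XY\le|XY|$, and the required inequality holds.

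The parity assumption enters only through the sign of $XY-ZW$. Without it, i.e.\ for $\ell$ odd, the argument fails for $\alpha\beta>0$, which is consistent with the open case in the abstract.

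\textbf{Strictness.} If $(\alpha,\beta)\ne(0,0)$, say $\alpha\ne 0$, we check that the difference is positive for all $t>0$. When $\alpha\beta\ge 0$, the term $\alpha^2X^2$ is already positive, because $X(t)=2t^rR(t^2)$ and $R$ has nonnegative coefficients with constant term $f_{\ell-1}(0)g_s(0)=1$. When $\alpha\beta<0$, the difference is at least $2|\alpha\beta|ZW+\alpha^2\beta^2W^2$, which is positive because $W>0$. The case $\beta\ne 0$ is symmetric. Integrating against $t^{-2}$ then gives $E(\Phi)>E$ at $(0,0)$.

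The only delicate point is confirming that $M$, $R$ and $W$ have nonnegative coefficients for all $\ell\ge 2$. This reduces to $f_j\ge 0$ and $g_j\ge 0$ on $u\ge 0$, which follows directly from their recurrences.
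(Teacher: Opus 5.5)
Your proof is correct and is essentially the paper's argument. The paper shows that the quadratic part $X^2\alpha^2+2(XY-ZW)\alpha\beta+Y^2\beta^2$ is positive definite via $X^2Y^2-(XY-ZW)^2=ZW(2XY-ZW)>0$; your split on the sign of $\alpha\beta$ (using $XY-ZW>0$ when $\alpha\beta\ge 0$, and AM--GM with $ZW\ge 0$ when $\alpha\beta<0$) is that same determinant check taken one half at a time, and your explicit check that $X,Y,Z,W>0$ via the nonnegative coefficients of $f_j,g_j$ supplies a positivity fact the paper only asserts.
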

\begin{proof}
    By Lemma~\ref{lem:odd-odd-kernel}, since $\ell$ is even,
    we always have
    \begin{equation}
        X(t)Y(t)-Z(t)W(t)>0 \qquad (t>0).
    \end{equation}
    From~\eqref{eq:oo-kernel}, one can compute that
    \begin{equation}\label{eq:oo-ell-odd-fails-here}
        K(t;\alpha,\beta)-Z(t)^2
        =X(t)^2\alpha^2+2(X(t)Y(t)-Z(t)W(t))\alpha\beta+Y(t)^2\beta^2+W(t)^2\alpha^2\beta^2.
    \end{equation}
    The quadratic part in the last equation is positive definite,
    because
    \begin{equation}
        X(t)^2 Y(t)^2-(X(t)Y(t)-Z(t)W(t))^2
        =Z(t)W(t)\left(2X(t)Y(t)-Z(t)W(t)\right),
    \end{equation}
    while
    \begin{equation}
        Z(t) W(t)>0 \qquad\text{and}\qquad
        2X(t)Y(t)-Z(t)W(t)>X(t)Y(t)-Z(t)W(t)>0.
    \end{equation}
    Hence
    \begin{equation}
        K(t;\alpha,\beta)\ge Z(t)^2,
    \end{equation}
    with equality if and only if $(\alpha,\beta)=(0,0)$.
    This completes the proof.
\end{proof}

When $\ell$ is odd,
the quadratic part in~\eqref{eq:oo-ell-odd-fails-here}
is not necessarily positive definite.
As a result, the proof of Theorem~\ref{thm:oo-even-ell-min}
cannot be adapted directly.

Although we do not obtain a full solution to this case,
the following results give several reductions
for the remaining problem.
For the rest of this section,
by $E(\alpha,\beta)$ we denote the energy
of $D_{r,s,\ell}$ where $r,s,\ell$ are all odd
under the parameters $\alpha,\beta$.
We start with a lemma which shows that any energy minimizer
must satisfy $\alpha\beta\ge 0$.

\begin{lemma}\label{lem:oo-odd-sign-restriction}
    Assume that $r$, $s$, and $\ell$ are odd.
    If $\alpha\beta\le 0$ and $(\alpha,\beta)\ne(0,0)$, then
    \begin{equation}
        K(t;\alpha,\beta)>K(t;0,0)
        \qquad\text{for every }t>0.
    \end{equation}
    Consequently,
    \begin{equation}\label{eq:pos-neg-is-not-min-energy}
        E(\alpha,\beta)>E(0,0).
    \end{equation}
    Moreover, it holds that
    \begin{equation}
        E(-\alpha,-\beta)=E(\alpha,\beta).
    \end{equation}
    Hence every point strictly improving on $(0,0)$
    must satisfy $\alpha\beta>0$,
    and every nonzero energy minimizer
    has the companion minimizer $(-\alpha,-\beta)$.
\end{lemma}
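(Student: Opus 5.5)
The plan is to work directly with the expansion \eqref{eq:oo-ell-odd-fails-here}, which is valid for every $t>0$ and every parity of $\ell$:
\begin{equation*}
K(t;\alpha,\beta)-K(t;0,0)
=X^2\alpha^2+2(XY-ZW)\alpha\beta+Y^2\beta^2+W^2\alpha^2\beta^2,
\end{equation*}
where we used $K(t;0,0)=Z(t)^2$. The only term that can be negative is the cross term, and it is controlled by the sign statement \eqref{eq:oo-sign-statement}.

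First I would establish the pointwise inequality. Since $\ell$ is odd, \eqref{eq:oo-sign-statement} gives $XY-ZW<0$ for $t>0$. Strictness holds because $f_{r-1}(u),f_{s-1}(u)\ge 1$ for $u>0$; this follows from the recurrence with nonnegative coefficients. Hence, when $\alpha\beta\le 0$, the product $2(XY-ZW)\alpha\beta$ is $\ge 0$, and the term $W^2\alpha^2\beta^2$ is also $\ge 0$.

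Next I would check that $X(t)=2t^rR(t^2)>0$ and $Y(t)>0$ for $t>0$. For this, note that $R(u)$ and $S(u)$ are polynomials in $u$ with nonnegative coefficients and constant term $f_{\ell-1}(0)g_s(0)=1$. Consequently $X^2\alpha^2+Y^2\beta^2>0$ whenever $(\alpha,\beta)\ne(0,0)$. This yields $K(t;\alpha,\beta)>K(t;0,0)$ for every $t>0$.

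Then I would integrate. Since $\log$ is increasing, integrating against $\frac{1}{\pi t^2}$ via \eqref{eq:oo-coulson-expression} gives $E(\alpha,\beta)\ge E(0,0)$. The integrand difference is continuous and strictly positive on $(0,\infty)$, so the difference of integrals is strictly positive, which gives \eqref{eq:pos-neg-is-not-min-energy}. Finiteness of both integrals is not an issue, because each equals an energy.

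For the symmetry, I would observe from \eqref{eq:oo-kernel} that replacing $(\alpha,\beta)$ by $(-\alpha,-\beta)$ fixes $\alpha\beta$ and flips the sign of $\alpha X+\beta Y$, whose square is unchanged. Hence $K(t;-\alpha,-\beta)=K(t;\alpha,\beta)$ pointwise, and so $E(-\alpha,-\beta)=E(\alpha,\beta)$. The final consequences follow immediately: any point with $E<E(0,0)$ must have $\alpha\beta>0$ by contraposition, and any minimizer $(\alpha,\beta)$ comes paired with $(-\alpha,-\beta)$.

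The only delicate step is confirming the strict positivity facts, namely $X,Y>0$ and the nonvanishing of $f_{r-1}f_{s-1}$. Both follow from the nonnegativity of the coefficients of $f_j$ and $g_j$ together with $f_j(0)=g_j(0)=1$.
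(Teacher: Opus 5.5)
Your proposal is correct and follows the paper's proof essentially verbatim. You use the expansion \eqref{eq:oo-ell-odd-fails-here} together with the sign statement \eqref{eq:oo-sign-statement} to make every term nonnegative when $\alpha\beta\le 0$, then positivity of $X,Y$, monotonicity of $\log$ in the Coulson integral, and the symmetry read off from \eqref{eq:oo-kernel}. Your explicit checks that $X,Y>0$ (via $R(0)=S(0)=1$ and nonnegative coefficients) and that $XY-ZW<0$ strictly only fill in details the paper leaves implicit, and for this lemma $XY-ZW\le 0$ would already suffice.
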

\begin{proof}
    Since $\ell$ is odd, by Lemma~\ref{lem:odd-odd-kernel},
    $X(t)Y(t)-Z(t)W(t)<0$.
    Therefore, if $\alpha\beta\le0$, every term in
    \eqref{eq:oo-ell-odd-fails-here} is nonnegative.
    Moreover, if $(\alpha,\beta)\ne(0,0)$,
    at least one of the two terms
    $X(t)^2\alpha^2$ and $Y(t)^2\beta^2$ is strictly positive
    for every $t>0$.
    Hence~\eqref{eq:pos-neg-is-not-min-energy} follows
    from~\eqref{eq:oo-coulson-expression}
    and the monotonicity of $\log x$.
    
    Finally, the symmetry follows immediately from
    \eqref{eq:oo-kernel}, because changing $(\alpha,\beta)$ to
    $(-\alpha,-\beta)$ changes the sign of $\alpha X+\beta Y$ but not its
    square, and leaves $\alpha\beta$ unchanged.
\end{proof}

The next result gives a pointwise minimum expression
for the kernel $K$ when $t$ is fixed.

\begin{theorem}\label{thm:oo-fixed-product-minimization}
    Assume that $r$, $s$, and $\ell$ are odd.
    Fix $t>0$, write $X=X(t)$, $Y=Y(t)$, $Z=Z(t)$, $W=W(t)$.
    Put $x=|\alpha|$, $y=|\beta|$, and $p=xy\in[0,1]$.
    Then, for fixed $p$,
    \begin{equation}\label{eq:fixed-product-minimization}
        \min_{\substack{x,y\in[0,1]\\xy=p}}K(t;\pm x,\pm y)
        =(Z-pW)^2+\mu_t(p),
    \end{equation}
    where the common sign is irrelevant and
    \begin{equation}\label{eq:mu-t-p}
        \mu_t(p)=
        \begin{cases}
            4pXY, & 0\le p\le p_0(t),\\
            (X+pY)^2, & p_0(t)<p\le 1\text{ and }X\le Y,\\
            (pX+Y)^2, & p_0(t)<p\le 1\text{ and }Y<X,
        \end{cases}
    \end{equation}
    with
    \begin{equation}
        p_0(t)=\min\{X/Y,Y/X\}.
    \end{equation}

    Moreover, in the branch $0\le p\le p_0(t)$,
    the product that pointwise minimizes~\eqref{eq:fixed-product-minimization}
    is the projection of
    \begin{equation}\label{eq:pointwise-critical-product}
        p^*(t)=\frac{Z(t)W(t)-2X(t)Y(t)}{W(t)^2}
    \end{equation}
    onto the interval $[0,p_0(t)]$.
\end{theorem}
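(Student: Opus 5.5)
We reduce to one variable along the curve $xy=p$ and then minimize twice by elementary convexity. By the symmetry in Lemma~\ref{lem:oo-odd-sign-restriction}, $K(t;x,y)=K(t;-x,-y)$, so the common sign is irrelevant and we may take $\alpha=x\ge0$, $\beta=y\ge0$. We also record that $X(t),Y(t),W(t)>0$ and $Z(t)>0$ for $t>0$. Indeed, the recurrence for $f_j$ shows that $f_j$ and $g_j$ have nonnegative coefficients and constant term $1$. Hence $R(u),S(u),T(u),M(u)\ge1$ for $u\ge0$, so $X=2t^rR>0$, $Y=2t^sS>0$ and $W>0$. In particular $p_0(t)=\min\{X/Y,Y/X\}\in(0,1]$ is well defined.

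\textbf{Step 1 (reduction to a one-variable problem).} With $xy=p$ fixed, the first summand $(Z-pW)^2$ of $K$ is constant. It remains to minimize $(xX+yY)^2$ over $x,y\in[0,1]$ with $xy=p$. Since $xX+yY\ge0$, this is the same as minimizing $h(x)=xX+pY/x$.
\begin{itemize}
\item The constraint $y=p/x\le1$ means $x\ge p$, so the domain is $x\in[p,1]$ when $p>0$.
\item The case $p=0$ is trivial: take $x=y=0$, giving $\mu_t(0)=0=4\cdot0\cdot XY$.
\end{itemize}

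\textbf{Step 2 (AM--GM and the boundary cases).} On $(0,\infty)$, $h$ is strictly convex, with unconstrained minimizer $x^*=\sqrt{pY/X}$ and minimum value $2\sqrt{pXY}$.
\begin{itemize}
\item Feasibility $x^*\le1$ is equivalent to $p\le X/Y$.
\item Feasibility $x^*\ge p$, i.e.\ $y^*=\sqrt{pX/Y}\le1$, is equivalent to $p\le Y/X$.
\end{itemize}
So $x^*$ is feasible exactly when $p\le p_0(t)$, and then $\mu_t(p)=h(x^*)^2=4pXY$.

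If $p>p_0(t)$, then exactly one constraint fails, and by convexity the minimum of $h$ on $[p,1]$ is at the violated endpoint.
\begin{itemize}
\item If $X\le Y$, then $p_0=X/Y$ and $x^*>1$. The minimum is at $x=1,\ y=p$, giving $(X+pY)^2$.
\item If $Y<X$, then $p_0=Y/X$ and $x^*<p$. The minimum is at $x=p,\ y=1$, giving $(pX+Y)^2$.
\end{itemize}
This yields \eqref{eq:fixed-product-minimization} and \eqref{eq:mu-t-p}.

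\textbf{Step 3 (optimal product in the first branch).} On $[0,p_0(t)]$ the pointwise minimum is
\[
\phi(p)=(Z-pW)^2+4pXY .
\]
This is a quadratic in $p$ with leading coefficient $W^2>0$, so it is strictly convex. Setting
\[
\phi'(p)=-2W(Z-pW)+4XY=0
\]
gives $p=p^*(t)=(ZW-2XY)/W^2$. The minimizer of a strictly convex quadratic on an interval is the projection of its vertex onto that interval, which proves the final claim.

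The main point needing care is Step 2, specifically matching the two feasibility constraints to the correct endpoint in each case $X\le Y$ or $Y<X$. Everything else is routine once the positivity of $X$, $Y$ and $W$ is established.
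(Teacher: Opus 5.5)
Your proof is correct and follows essentially the same route as the paper. You fix the product, reduce to minimizing $xX+pY/x$ on $[p,1]$, use the AM--GM point $\sqrt{pY/X}$ (feasible iff $p\le p_0(t)$, with the violated endpoint taken otherwise), and then project the vertex of the convex quadratic $(Z-pW)^2+4pXY$ onto $[0,p_0(t)]$. Your extra checks, namely positivity of $X,Y,Z,W$, the degenerate case $p=0$, and that exactly one constraint fails when $p>p_0(t)$, fill in details the paper leaves implicit.
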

\begin{proof}
    With $xy=p$, the first term of~\eqref{eq:oo-kernel}
    is $(Z-pW)^2$ and the second term
    is $(xX+yY)^2$, independently of the common sign.
    Thus, as $y=p/x$, we may minimize
    \begin{equation}
        F(x)=xX+\frac{pY}{x},\qquad x\in[p,1].
    \end{equation}
    The unconstrained minimum is attained at
    $x=\sqrt{pY/X}$ and $y=\sqrt{pX/Y}$.
    This point is feasible exactly when $p\le \min\{X/Y,Y/X\}$,
    and then $F_{\min}=2\sqrt{pXY}$, yielding $\mu_t(p)=4pXY$.
    If feasibility fails, the minimum is attained at the
    boundary $x=1$ when $X\le Y$,
    and at the boundary $y=1$ when $Y<X$.
    This gives the two remaining branches.

    In the first branch, by minimizing
    $(Z-pW)^2+4pXY$ with respect to $p$,
    we see that the critical point is exactly
    \eqref{eq:pointwise-critical-product}.
    However, if it lies outside the allowed interval,
    it should be projected to the nearest endpoint.
\end{proof}

One may conjecture that in the symmetric case $r=s$,
the two cycles play symmetric roles
so any nonzero minimizer should lie on the diagonal $\alpha=\beta$.
The next theorem confirms this for $(\alpha,\beta)\in(-1,1)^2$
and records the first-order stationarity conditions,
written as integral equations, for interior minimizers.

\begin{theorem}
    Assume that $r$, $s$, and $\ell$ are odd. Let
    $(\alpha,\beta)\in(-1,1)^2$ be an energy minimizer
    and define
    \begin{equation}
        A(t)=Z(t)-\alpha\beta W(t),\qquad
        B(t)=\alpha X(t)+\beta Y(t),\qquad
        C(t)=A(t)^2+B(t)^2.
    \end{equation}
    Then it holds that
    \begin{equation}\label{eq:EL-alpha-beta}
        \int_0^{+\infty} \frac{-\beta W(t)A(t)+X(t)B(t)}{t^2C(t)}\,dt=0,
        \qquad
        \int_0^{+\infty} \frac{-\alpha W(t)A(t)+Y(t)B(t)}{t^2C(t)}\,dt=0.
    \end{equation}
    Consequently, $(\alpha,\beta)$ satisfies the identities
    \begin{equation}\label{eq:EL-identity-1}
        \alpha^2\int_0^{+\infty}\frac{X(t)^2}{t^2C(t)}\,dt
        =\beta^2\int_0^{+\infty}\frac{Y(t)^2}{t^2C(t)}\,dt
    \end{equation}
    and
    \begin{equation}\label{eq:EL-identity-2}
        \int_0^{+\infty}
        \frac{B(t)^2-2\alpha\beta W(t)A(t)}{t^2C(t)}\,dt=0.
    \end{equation}

    If additionally $r=s$, then every interior energy minimizer
    $(\alpha,\beta)$ with $\alpha\beta>0$ satisfies $\alpha=\beta$.
    Equivalently, in the symmetric case $D_{r,r,\ell}$,
    every nonzero interior minimizer that improves on $(0,0)$
    lies on the diagonal $\alpha=\beta$.
\end{theorem}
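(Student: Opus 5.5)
The plan is to obtain \eqref{eq:EL-alpha-beta} as the first-order conditions $\partial_\alpha E=\partial_\beta E=0$ at an interior minimizer. I will differentiate the Coulson representation \eqref{eq:oo-coulson-expression} under the integral sign, and then take two linear combinations of these conditions.

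First I check that the kernel never vanishes. By Lemma~\ref{lem:odd-odd-kernel}, $C(t)=K(t;\alpha,\beta)=|t^nP_\Phi(i/t)|^2$. The eigenvalues of $A(\Phi)$ are real, so $i/t$ is never a root of $P_\Phi$ for $t>0$, and hence $C(t)>0$ on $(0,\infty)$ for every $(\alpha,\beta)$.

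Next I justify differentiation under the integral. Formally,
\begin{equation*}
\partial_\alpha\frac{1}{t^2}\log K=\frac{2(-\beta W A+XB)}{t^2C},\qquad
\partial_\beta\frac{1}{t^2}\log K=\frac{2(-\alpha W A+YB)}{t^2C}.
\end{equation*}
I need a dominating function that is uniform for $(\alpha,\beta)$ in a small compact neighbourhood of the minimizer inside $(-1,1)^2$.
\begin{itemize}
\item Near $t=0$: $Z(0)=M(0)=1$, so $C$ is bounded below near $0$. Also $X=O(t^r)$, $Y=O(t^s)$ and $W=O(t^{r+s})$ with $r,s\ge3$, so the numerators are $O(t^3)$ and the quotient by $t^2$ is bounded.
\item Near $t=\infty$: every numerator term is a product of two of the polynomials appearing in $C=A^2+B^2$. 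By Cauchy--Schwarz, $|WA|\le |W|\sqrt C$ and $|XB|\le |X|\sqrt C$. I will bound $|W|/\sqrt C$ and $|X|/\sqrt C$ by comparing degrees: $\deg_t$ of $t^nP_\Phi(i/t)$ is governed by the constant term of $P_\Phi$, which depends continuously on $(\alpha,\beta)$. If that leading behaviour degenerates, I instead use a crude bound by a polynomial ratio of nonpositive degree.
\end{itemize}
In either regime the integrand is $O(1/t^2)$ at infinity, uniformly on the neighbourhood. The tail estimate is the step I expect to need the most care. The cleanest route is to note that $C(t;\alpha,\beta)$ is jointly continuous and positive, which gives uniform control on compact $t$-intervals, and to handle large $t$ by a degree comparison that is locally uniform. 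Dominated convergence then gives that $E$ is $C^1$ near the minimizer. Since the minimizer is interior, both partials vanish, which is \eqref{eq:EL-alpha-beta}.

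For the identities, write $I_1,I_2$ for the two integrals in \eqref{eq:EL-alpha-beta}.
\begin{itemize}
\item $\alpha I_1-\beta I_2=0$: the $WA$ terms cancel and the integrand becomes $B(\alpha X-\beta Y)=\alpha^2X^2-\beta^2Y^2$, which gives \eqref{eq:EL-identity-1}.
\item $\alpha I_1+\beta I_2=0$: the integrand becomes $B(\alpha X+\beta Y)-2\alpha\beta WA=B^2-2\alpha\beta WA$, which gives \eqref{eq:EL-identity-2}.
\end{itemize}

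Finally, suppose $r=s$. From \eqref{eq:oo-four-polynomials} we get $R=S$, hence $X=Y$. Then \eqref{eq:EL-identity-1} reads
\begin{equation*}
(\alpha^2-\beta^2)\int_0^\infty \frac{X^2}{t^2C}\,dt=0.
\end{equation*}
The integral is strictly positive because $X\not\equiv0$ and $C>0$, so $\alpha^2=\beta^2$. Together with $\alpha\beta>0$ this forces $\alpha=\beta$. The "equivalently" clause follows from Lemma~\ref{lem:oo-odd-sign-restriction}, which shows that any point strictly improving on $(0,0)$ has $\alpha\beta>0$.
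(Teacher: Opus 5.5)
Your route is the same as the paper's: differentiate the Coulson integral at an interior minimizer, take the $\alpha$- and $\beta$-combinations, and for $r=s$ use $X=Y$ together with Lemma~\ref{lem:oo-odd-sign-restriction}. Your algebra after the first-order conditions is fine, and so are your $t\to0$ and compact-interval estimates. The gap is the step you flagged, and it cannot be closed as stated.

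Since $D_{r,s,\ell}$ and $P_{\ell-1}$ each have exactly one perfect matching, the $t^n$-coefficient of $A(t)=Z(t)-\alpha\beta W(t)$ is $1-4\alpha\beta$, which is $\pm P_\Phi(0)$. Meanwhile $B$ has degree $n-1$, with nonzero leading coefficient when $\alpha\beta>0$. On the curve $4\alpha\beta=1$, which passes through the interior of the square (e.g.\ $(1/2,1/2)$), $\deg C$ drops from $2n$ to $2n-2$. So no locally uniform degree comparison is possible there. At such a point $|W|/\sqrt{C}$ grows like $t$. For nearby parameters with $\varepsilon=1-4\alpha\beta\to0$, the part of the integral with $t$ of order $1/|\varepsilon|$ contributes about $\mp 2\pi\beta/b$ to your first integral, where $b$ is the leading coefficient of $B$. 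This does not tend to $0$, so dominated convergence fails and $\partial_\alpha E$ jumps across the curve. This is not an artifact of the estimate. The characteristic polynomial has a simple root $\lambda_0$ with $\lambda_0=0$ exactly on that curve and $\nabla\lambda_0\neq0$. Hence $E=|\lambda_0|+F$ with $F=\sum_{\lambda_j\ne\lambda_0}|\lambda_j|$ smooth, and neither partial derivative of $E$ exists on the curve.

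The conclusion itself also fails there. On the curve the integrals in \eqref{eq:EL-alpha-beta} still converge, but from $K=\prod_j(1+\lambda_j^2t^2)$ one gets
\[
\int_0^{\infty}\frac{1}{t^2}\,\partial_\alpha\log K\,dt=\int_0^{\infty}\sum_j\frac{2\lambda_j\,\partial_\alpha\lambda_j}{1+\lambda_j^2t^2}\,dt=\pi\sum_{\lambda_j\neq0}\sgn(\lambda_j)\,\partial_\alpha\lambda_j=\pi\,\partial_\alpha F .
\]
This is the average of the two one-sided derivatives. Minimality at a kink only gives $|\partial_\alpha F|\le|\partial_\alpha\lambda_0|$, not $\partial_\alpha F=0$.

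The paper's own numerics place $474$ of the $477$ improving points on $\alpha\beta\approx1/4$. For $D_{3,3,1}$ the search returns $(1/2,1/2)$. There the one-sided slopes of $a\mapsto E(a,a)$, read off Figure~\ref{fig:d331-diagonal-gap}, are about $-0.35$ and $+0.97$. Their average is nonzero, so by symmetry $\partial_\alpha F=\partial_\beta F\neq0$, and \eqref{eq:EL-alpha-beta} fails at that point. The paper's justification, that ``the kernel is a polynomial in the parameters'', has exactly the same hole.

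What your argument does prove is the theorem under the extra hypothesis $4\alpha\beta\neq1$, i.e.\ $P_\Phi(0)\neq0$ at the minimizer. On a compact neighbourhood with $|1-4\alpha\beta|\ge c>0$, one has $|A(t)|\ge\frac{c}{2}t^n$ for $t\ge T_0$ uniformly. Then $|WA|/C\le|W|/|A|$ and $|XB|/C\le|X||B|/A^2$ are uniformly bounded, and the integrand is $O(t^{-2})$. From there, \eqref{eq:EL-identity-1}, \eqref{eq:EL-identity-2} and the diagonal conclusion for $r=s$ follow as you wrote, but only for minimizers off that curve.
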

\begin{proof}
    Since the point $(\alpha,\beta)$
    lies in the interior of the square,
    an interior minimizer must satisfy
    $\partial_\alpha E(\Phi)=\partial_\beta E(\Phi)=0$.
    We may differentiate the Coulson integral in~\eqref{eq:oo-coulson-expression}
    with respect to $\alpha$ and $\beta$,
    because the kernel $K(t;\alpha,\beta)$
    is a polynomial in the parameters,
    so the differentiation can be justified uniformly
    in a compact neighborhood of the chosen interior point.
    We have
    \begin{align}
        \frac{\partial K(t)}{\partial\alpha}
        &=2A(t)(-\beta W(t))+2B(t)X(t)=2(-\beta W(t)A(t)+X(t)B(t)), \\
        \frac{\partial K(t)}{\partial\beta}
        &=2A(t)(-\alpha W(t))+2B(t)Y(t)=2(-\alpha W(t)A(t)+Y(t)B(t)).
    \end{align}
    This gives~\eqref{eq:EL-alpha-beta}.
    Multiplying the first equation
    in~\eqref{eq:EL-alpha-beta} by $\alpha$,
    the second by $\beta$, and subtracting them gives
    \begin{equation}
        \int_0^{+\infty}
        \frac{(\alpha X(t)-\beta Y(t))(\alpha X(t)+\beta Y(t))}{t^2C(t)}\,dt=0,
    \end{equation}
    which is exactly~\eqref{eq:EL-identity-1}.
    Adding the same two equations gives~\eqref{eq:EL-identity-2}.
    If $r=s$, then $X=Y$, so~\eqref{eq:EL-identity-1} implies
    $\alpha^2=\beta^2$.
    By Lemma~\ref{lem:oo-odd-sign-restriction},
    $\alpha\beta>0$, therefore $\alpha=\beta$.
\end{proof}

We now give a Hessian criterion
which provides a sufficient condition for $(0,0)$
to fail to be an energy minimizer.

\begin{theorem}\label{thm:oo-hessian}
    Assume that $r$, $s$, and $\ell$ are odd.
    Write three integrals
    \begin{equation}
        I_1=\int_0^{+\infty}\frac{X(t)^2}{t^2Z(t)^2}\,dt,
        \qquad
        I_2=\int_0^{+\infty}\frac{Y(t)^2}{t^2Z(t)^2}\,dt,
        \qquad
        I_3=\int_0^{+\infty}\frac{Z(t)W(t)-X(t)Y(t)}{t^2Z(t)^2}\,dt.
    \end{equation}
    Then the Hessian matrix of $E$ at $(0,0)$ is
    \begin{equation}\label{eq:hessian-matrix}
        H(0,0)=\frac{2}{\pi}
        \begin{bmatrix}
            I_1 & -I_3\\
            -I_3 & I_2
        \end{bmatrix}.
    \end{equation}
    Consequently, $(0,0)$ is a strict local minimizer
    if $I_1I_2>I_3^2$,
    is degenerate to second order if $I_1I_2=I_3^2$,
    and is a saddle point if $I_1I_2<I_3^2$.

    Specifically,
    if $I_3^2>I_1 I_2$, then $(\alpha,\beta)=(0,0)$
    is not even a local minimizer,
    and hence the energy at that point cannot be
    a global minimum.
\end{theorem}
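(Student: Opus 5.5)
We compute the Hessian by differentiating the Coulson integral \eqref{eq:oo-coulson-expression} twice under the integral sign at $(\alpha,\beta)=(0,0)$, and then read off the local behaviour from the second-order Taylor expansion. The first step is to expand the kernel around the origin. Equation \eqref{eq:oo-ell-odd-fails-here} gives, exactly,
\begin{equation*}
K(t;\alpha,\beta)=Z^2\bigl(1+q_t(\alpha,\beta)+W^2\alpha^2\beta^2/Z^2\bigr),
\end{equation*}
where
\begin{equation*}
q_t(\alpha,\beta)=\frac{X^2\alpha^2+2(XY-ZW)\alpha\beta+Y^2\beta^2}{Z^2}.
\end{equation*}
Since $\log(1+h)=h+O(h^2)$ and $h$ is itself quadratic in $(\alpha,\beta)$, we obtain
\begin{equation*}
\log K(t;\alpha,\beta)=\log Z(t)^2+q_t(\alpha,\beta)+O(|(\alpha,\beta)|^4)
\end{equation*}
pointwise in $t$. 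Consequently the gradient of $E$ at the origin vanishes, in agreement with the symmetry $E(-\alpha,-\beta)=E(\alpha,\beta)$ of Lemma~\ref{lem:oo-odd-sign-restriction}. The second derivatives are those of $\frac1\pi\int_0^\infty q_t\,dt/t^2$, which gives the entries
\begin{equation*}
\partial_\alpha^2E=\tfrac{2}{\pi}I_1,\qquad \partial_\beta^2E=\tfrac{2}{\pi}I_2,\qquad \partial_\alpha\partial_\beta E=\tfrac{2}{\pi}\int\frac{XY-ZW}{t^2Z^2}\,dt=-\tfrac{2}{\pi}I_3.
\end{equation*}
This is exactly \eqref{eq:hessian-matrix}.

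The main obstacle is justifying differentiation under the integral and the convergence of $I_1,I_2,I_3$, especially at $t\to0$ and $t\to\infty$. I would rely on three facts.
\begin{itemize}
\item $Z(t)=M(t^2)$ is a polynomial in $u$ with nonnegative coefficients and constant term $1$, since $f_j(0)=g_j(0)=1$. Hence $Z\ge1$ on $[0,\infty)$, and the kernel is bounded away from $0$ near the origin in parameter space, uniformly in $t$.
\item Near $t=0$ we have $X=O(t^r)$, $Y=O(t^s)$ and $W=O(t^{r+s})$ with $r,s\ge3$. So every integrand is $O(t^4)$ divided by $t^2$, which is integrable.
\item As $t\to\infty$, degree counting gives $\deg_t Z=2\deg M=n-1$ or $n$. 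In fact $\deg_u M=\lfloor n/2\rfloor$ (the maximum matching size), and similarly for $X$, $Y$, $W$. Then $X/Z$, $Y/Z$ and $W/Z$ stay bounded, and the factor $1/t^2$ secures integrability at infinity.
\end{itemize}
With these bounds, for $|\alpha|,|\beta|\le\varepsilon$ the second derivatives of $\log K$ with respect to the parameters are dominated by $C(t)/t^2$ with $C$ integrable, uniformly on a compact neighbourhood. Dominated convergence then lets us differentiate twice under the integral sign.

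The classification then follows from standard second-derivative tests on the symmetric $2\times2$ matrix $\frac2\pi\begin{bmatrix}I_1&-I_3\\-I_3&I_2\end{bmatrix}$, whose diagonal entries are $I_1,I_2>0$.
\begin{itemize}
\item If $I_1I_2>I_3^2$, the matrix is positive definite, so $(0,0)$ is a strict local minimizer.
\item If $I_1I_2=I_3^2$, the matrix is singular, and second-order information is inconclusive.
\item If $I_1I_2<I_3^2$, the matrix is indefinite, so there is a direction $v$ with $v^{\top}Hv<0$.
\end{itemize}
Since $(0,0)$ is interior to $[-1,1]^2$ and $E$ is $C^2$ there, in the indefinite case $E(\tau v)<E(0,0)$ for small $\tau\ne0$. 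Hence $(0,0)$ is not a local minimizer, and a fortiori not a global minimizer. By Lemma~\ref{lem:oo-odd-sign-restriction}, any such descent direction necessarily has components of the same sign, which serves as a consistency check: $I_3>0$ is required.
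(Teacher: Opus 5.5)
Your proposal is correct and takes essentially the same route as the paper. Both expand $K$ (equivalently $\log K$) to second order at the origin, read off the Hessian entries $\frac{2}{\pi}I_1$, $\frac{2}{\pi}I_2$, $-\frac{2}{\pi}I_3$, justify differentiation under the Coulson integral by domination, and apply the standard second-derivative test. Your domination argument is more explicit than the paper's one-line justification. One point is left implicit: boundedness of $X/Z$, $Y/Z$, $W/Z$ at infinity uses that $n=r+s+\ell-1$ is even here, so $\deg_t Z=\deg_t W=n$ and $\deg_t X=\deg_t Y=n-1$.
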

\begin{proof}
    Recall that, for $\theta=(\alpha,\beta)$ near the origin,
    \begin{equation}
        K(t;\alpha,\beta)=Z(t)^2+X(t)^2\alpha^2
        +2\left(X(t)Y(t)-Z(t)W(t)\right)\alpha\beta
        +Y(t)^2\beta^2+W(t)^2\alpha^2\beta^2.
    \end{equation}
    In particular,
    \begin{align}
        K(t;0,0)&=Z(t)^2. \\
        \partial_\alpha K(t;0,0)&=\partial_\beta K(t;0,0)=0. \\
        \partial_{\alpha\alpha}K(t;0,0)&=2X(t)^2,\qquad
        \partial_{\beta\beta}K(t;0,0)=2Y(t)^2. \\
        \partial_{\alpha\beta}K(t;0,0)&=2\left(X(t)Y(t)-Z(t)W(t)\right).
    \end{align}

    Since $Z(t)>0$ for $t>0$, the identity
    \begin{equation}
        \partial_{ij}\log K
        =\frac{\partial_{ij}K}{K}
        -\frac{(\partial_iK)(\partial_jK)}{K^2}
    \end{equation}
    gives
    \begin{align}
        \partial_{\alpha\alpha}\log K(t;0,0)
        &=\frac{2X(t)^2}{Z(t)^2},\\
        \partial_{\beta\beta}\log K(t;0,0)
        &=\frac{2Y(t)^2}{Z(t)^2},\\
        \partial_{\alpha\beta}\log K(t;0,0)
        &=\frac{2\left(X(t)Y(t)-Z(t)W(t)\right)}{Z(t)^2}.
    \end{align}
    Differentiation under the Coulson integral is justified
    in a sufficiently small neighborhood of the origin,
    because the differentiated kernels are dominated
    by the corresponding integrable rational functions
    appearing in the definitions of $I_1,I_2,I_3$.
    Therefore, from~\eqref{eq:oo-coulson-expression},
    \begin{align}
        \partial_{\alpha\alpha}E(0,0)
        &=\frac{2}{\pi}
        \int_0^{+\infty}\frac{X(t)^2}{t^2Z(t)^2}\,dt
        =\frac{2}{\pi}I_1,\\
        \partial_{\beta\beta}E(0,0)
        &=\frac{2}{\pi}
        \int_0^{+\infty}\frac{Y(t)^2}{t^2Z(t)^2}\,dt
        =\frac{2}{\pi}I_2,\\
        \partial_{\alpha\beta}E(0,0)
        &=\frac{2}{\pi}
        \int_0^{+\infty}
        \frac{X(t)Y(t)-Z(t)W(t)}{t^2Z(t)^2}\,dt
        =-\frac{2}{\pi}I_3.
    \end{align}
    This proves~\eqref{eq:hessian-matrix}.
    The stated alternatives follow.
\end{proof}

We now present a numerical example.
For each odd triple $(r,s,\ell)$ such that
\begin{equation}
    r,s\in\{3,5,7,9,11,13,15,17\},
    \qquad
    \ell\in\{1,3,5,7,9,11,13,15,17\},
\end{equation}
we performed a numerical search for values
$E(\alpha,\beta)<E(0,0)$
over $(\alpha,\beta)\in[-1,1]^2$
by first performing a grid search with mesh size $0.1$
and then applying $10$ rounds of local refinement,
where in each round the mesh size was halved
and the search was restricted to a neighborhood of
radius two current mesh steps around the best point found so far.
Thus the final mesh width was $0.1/2^{10}$.
Among the $576$ parameter triples in this range,
this search found $477$ cases for which there is a point
$(\alpha,\beta)$ with $E(\alpha,\beta)<E(0,0)$.
It is worth noting that, among these,
$474$ satisfy $\alpha\beta\approx1/4$.
Table~\ref{tab:oo-odd-ell-counterexamples} lists
six representative cases.

\begin{table}[htbp]
    \centering
    \renewcommand{\arraystretch}{1.15}
    \setlength{\tabcolsep}{4pt}
    \begin{tabularx}{\textwidth}{|
        >{\centering\arraybackslash}m{0.06\textwidth}|
        >{\centering\arraybackslash}m{0.06\textwidth}|
        >{\centering\arraybackslash}m{0.06\textwidth}|
        Y|
        >{\centering\arraybackslash}m{0.15\textwidth}|
        >{\centering\arraybackslash}m{0.15\textwidth}|
        >{\centering\arraybackslash}m{0.15\textwidth}|}
    \hline
    $r$ & $s$ & $\ell$ & $(\alpha,\beta)$ & $E(\alpha,\beta)$ & $E(0,0)$ & $\Delta E$ \\
    \hline
    $3$  & $3$  & $1$  & $(0.500000,\,0.500000)$ & $7.841619$  & $7.924777$  & $0.083158$ \\
    \hline
    $3$  & $3$  & $5$  & $(0.197754,\,0.197754)$ & $13.000765$ & $13.000791$ & $0.000026$ \\
    \hline
    $5$  & $3$  & $1$  & $(0.676172,\,0.369727)$ & $10.437006$ & $10.505533$ & $0.068526$ \\
    \hline
    $3$  & $7$  & $3$  & $(0.312500,\,0.800000)$ & $15.562485$ & $15.598262$ & $0.035776$ \\
    \hline
    $3$  & $13$ & $5$  & $(0.250000,\,1.000000)$ & $25.766721$ & $25.789227$ & $0.022507$ \\
    \hline
    $17$ & $17$ & $17$ & $(0.500000,\,0.500000)$ & $64.068103$ & $64.073597$ & $0.005494$ \\
    \hline
    \end{tabularx}
    \caption{Six representative cases with
    $E(\alpha,\beta)<E(0,0)$.
    Here $\Delta E:=E(0,0)-E(\alpha,\beta)$.}
    \label{tab:oo-odd-ell-counterexamples}
\end{table}
\FloatBarrier

Applying Theorem~\ref{thm:oo-hessian}
to the same six triples gives the
following Hessian-test values.

\begin{table}[htbp]
    \centering
    \renewcommand{\arraystretch}{1.15}
    \setlength{\tabcolsep}{4pt}
    \begin{tabularx}{\textwidth}{|
        >{\centering\arraybackslash}m{0.06\textwidth}|
        >{\centering\arraybackslash}m{0.06\textwidth}|
        >{\centering\arraybackslash}m{0.06\textwidth}|
        >{\centering\arraybackslash}m{0.15\textwidth}|
        >{\centering\arraybackslash}m{0.15\textwidth}|
        >{\centering\arraybackslash}m{0.15\textwidth}|
        Y|}
    \hline
    $r$ & $s$ & $\ell$ & $I_1$ & $I_2$ & $I_3$ & $I_3^2-I_1I_2$ \\
    \hline
    $3$  & $3$  & $1$  & $0.522561$ & $0.522561$ & $1.010713$ & $0.748471$ \\
    \hline
    $3$  & $3$  & $5$  & $0.619852$ & $0.619852$ & $0.621954$ & $0.002611$ \\
    \hline
    $5$  & $3$  & $1$  & $0.195887$ & $0.725952$ & $0.775376$ & $0.459004$ \\
    \hline
    $3$  & $7$  & $3$  & $0.815322$ & $0.123086$ & $0.528180$ & $0.178619$ \\
    \hline
    $3$  & $13$ & $5$  & $0.971686$ & $0.044610$ & $0.339653$ & $0.072018$ \\
    \hline
    $17$ & $17$ & $17$ & $0.087627$ & $0.087627$ & $0.121413$ & $0.007063$ \\
    \hline
    \end{tabularx}
    \caption{Hessian-test values for the six examples in
    Table~\ref{tab:oo-odd-ell-counterexamples}.}
    \label{tab:oo-hessian-counterexamples}
\end{table}
\FloatBarrier

In every row of Table~\ref{tab:oo-hessian-counterexamples},
the final column is positive, so Theorem~\ref{thm:oo-hessian}
confirms that $(0,0)$ is not a local minimizer under those cases.

Specifically, we give an algebraic counterexample for $D_{3,3,1}$.

\begin{proposition}\label{prop:d331-certificate}
    For $D_{3,3,1}$, at $\alpha=\beta=1/2$ one has
    \begin{equation}
        E(1/2,1/2)=2+\sqrt{13}+\sqrt5=7.8416192529\ldots,
    \end{equation}
    whereas
    \begin{equation}
        E(0,0)=7.9247772163\ldots.
    \end{equation}
    Therefore $E(1/2,1/2)<E(0,0)$, with gap approximately
    $0.0831579634$.
\end{proposition}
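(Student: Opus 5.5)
The plan is to compute both energies directly from the spectrum, using Theorem~\ref{thm:dumbbell-char-poly} to get the characteristic polynomial. Label the vertices of $D_{3,3,1}$ so that the two triangles are $\{1,2,3\}$ and $\{4,5,6\}$ and the bridge is $14$. Since $\delta_3=(-1)^2=1$, the point $\alpha=\beta=1/2$ means $a=b=1/2$, realized for instance by $\gamma_r=\gamma_s=e^{i\pi/3}$.

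First I compute the four matching polynomials. The graph has $7$ edges. The number of $2$-matchings is $\binom72-\sum_v\binom{\deg v}{2}=21-10=11$. The only perfect matching is $\{14,23,56\}$, so
\begin{equation*}
m_{D_{3,3,1}}(x)=x^6-7x^4+11x^2-1 .
\end{equation*}
Deleting either triangle leaves a triangle, so $m_{D-C_r}(x)=m_{D-C_s}(x)=x^3-3x$. Also $P_{\ell-1}=P_0$, so $m_{P_0}=1$.

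Substituting into Theorem~\ref{thm:dumbbell-char-poly} gives the characteristic polynomial at $(1/2,1/2)$:
\begin{equation*}
P_\Phi(x)=x^6-7x^4-2x^3+11x^2+6x .
\end{equation*}
The key step is to factor this. One checks that $x=-2$ is a root, and the cofactor splits as
\begin{equation*}
P_\Phi(x)=x(x+2)(x^2-x-3)(x^2-x-1),
\end{equation*}
which I would verify by direct expansion. The eigenvalues are therefore $0$, $-2$, $(1\pm\sqrt{13})/2$ and $(1\pm\sqrt5)/2$. Each quadratic has roots of opposite signs, so its contribution to the energy is the absolute difference of its roots, namely $\sqrt{13}$ and $\sqrt5$. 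Hence $E(1/2,1/2)=2+\sqrt{13}+\sqrt5$, and the decimal value follows.

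For $E(0,0)$ the characteristic polynomial is just $m_{D_{3,3,1}}(x)$, which is the bipartite-like even form $Q(x^2)$ with
\begin{equation*}
Q(y)=y^3-7y^2+11y-1 .
\end{equation*}
The roots $y_1,y_2,y_3$ are positive, since the sign changes of $Q$ occur on $(0,1)$, $(1,3)$ and $(4,5)$. Therefore
\begin{equation*}
E(0,0)=2\bigl(\sqrt{y_1}+\sqrt{y_2}+\sqrt{y_3}\bigr).
\end{equation*}

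The only real obstacle is certifying the numerical value of $E(0,0)$ rigorously, since $Q$ has no nice radicals. I would isolate each $y_i$ in a short interval by sign evaluation or bisection with rational endpoints. Enclosing each root to within $10^{-6}$ bounds $E(0,0)$ well within the gap of about $0.083$. This proves $E(0,0)>7.92>7.842>E(1/2,1/2)$.
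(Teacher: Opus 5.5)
Your proposal is correct and follows essentially the same route as the paper. You specialize Theorem~\ref{thm:dumbbell-char-poly} to obtain $x(x+2)(x^2-x-3)(x^2-x-1)$ at $(1/2,1/2)$, and for $(0,0)$ you pass to the cubic $y^3-7y^2+11y-1$ in $y=\lambda^2$ and rigorously isolate its three positive roots (the paper uses Sturm sequences or rational root isolation, you use sign evaluation and bisection); your explicit matching counts and the sign-change argument for the positivity of the $y_i$ fill in details the paper states without derivation.
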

\begin{proof}
    Substituting $r=s=3$ and $\ell=1$ in
    Theorem~\ref{thm:dumbbell-char-poly} gives
    \begin{equation}\label{eq:d331-polynomial-general}
        P_{\alpha,\beta}(\lambda)
        =\lambda^6-7\lambda^4+11\lambda^2-1
        -2(\alpha+\beta)(\lambda^3-3\lambda)+4\alpha\beta.
    \end{equation}
    At $\alpha=\beta=1/2$,
    \begin{equation}
        P_{1/2,1/2}(\lambda)
        =\lambda(\lambda+2)(\lambda^2-\lambda-3)(\lambda^2-\lambda-1).
    \end{equation}
    The sum of the absolute values of the roots of this polynomial
    is $2+\sqrt{13}+\sqrt5$.

    On the other hand,
    \begin{equation}
        P_{0,0}(\lambda)=\lambda^6-7\lambda^4+11\lambda^2-1.
    \end{equation}
    Set $y=\lambda^2$,
    then the three positive values of $y$ are the roots of
    \begin{equation}
        q(y)=y^3-7y^2+11y-1.
    \end{equation}
    A Sturm-sequence verification,
    equivalently rational root isolation, gives
    \begin{equation}
        \begin{aligned}
        y_1&\in(0.09678807,0.09678808),\\
        y_2&\in(2.19393656,2.19393657),\\
        y_3&\in(4.70927535,4.70927537).
        \end{aligned}
    \end{equation}
    Hence
    \begin{equation}
        E(0,0)=2(\sqrt{y_1}+\sqrt{y_2}+\sqrt{y_3})
        \in(7.9247771945,7.9247772427),
    \end{equation}
    while $2+\sqrt{13}+\sqrt5<7.8416192530$. This proves the strict
    inequality.
\end{proof}

Figure~\ref{fig:d331-diagonal-gap} plots the diagonal energy gap
$E(0,0)-E(a,a)$ for $D_{3,3,1}$.
It also indicates that nonsmooth points must be handled separately,
as the polynomial $P_{1/2,1/2}$ has a zero root at the point $a=1/2$,
and hence the energy is not differentiable there.

\begin{figure}[htbp]
    \centering
    \begin{tikzpicture}
    \begin{axis}[
        width=0.76\textwidth,
        height=0.42\textwidth,
        xmin=0,xmax=1,
        ymin=-0.40,ymax=0.11,
        xlabel={$a$ in the diagonal slice $\alpha=\beta=a$},
        ylabel={$E(0,0)-E(a,a)$},
        tick label style={font=\small},
        label style={font=\small},
        grid=both,
        minor tick num=1,
        major grid style={line width=0.2pt,draw=gray!35},
        minor grid style={line width=0.1pt,draw=gray!20},
        axis lines=left,
        scaled ticks=false,
        legend style={draw=none,font=\small,at={(0.03,0.97)},anchor=north west}
    ]
        \addplot[black,thick,mark=none] coordinates {
            (0.000,0.000000) (0.025,0.000194) (0.050,0.000777)
            (0.075,0.001751) (0.100,0.003116) (0.125,0.004875)
            (0.150,0.007032) (0.175,0.009592) (0.200,0.012558)
            (0.225,0.015937) (0.250,0.019737) (0.275,0.023964)
            (0.300,0.028627) (0.325,0.033738) (0.350,0.039306)
            (0.375,0.045345) (0.400,0.051869) (0.425,0.058894)
            (0.450,0.066437) (0.475,0.074517) (0.500,0.083158)
            (0.525,0.058859) (0.550,0.034769) (0.575,0.010884)
            (0.600,-0.012802) (0.625,-0.036293) (0.650,-0.059592)
            (0.675,-0.082704) (0.700,-0.105633) (0.725,-0.128382)
            (0.750,-0.150956) (0.775,-0.173358) (0.800,-0.195590)
            (0.825,-0.217658) (0.850,-0.239563) (0.875,-0.261308)
            (0.900,-0.282898) (0.925,-0.304334) (0.950,-0.325620)
            (0.975,-0.346758) (1.000,-0.367752)
        };
        \addlegendentry{diagonal gap}
        \addplot[black,dashed,domain=0:1,samples=2] {0};
        \addplot[only marks,mark=*,mark size=2.2pt] coordinates {(0.5,0.083158)};
        \node[anchor=south east,font=\scriptsize] at (axis cs:0.5,-0.01) {$a=1/2$};
        \node[anchor=north west,font=\scriptsize] at (axis cs:0.61,0) {zero of the gap};
    \end{axis}
    \end{tikzpicture}
    \caption{Diagonal energy gap for $D_{3,3,1}$.
    Positive values mean that the diagonal point
    $(a,a)$ improves upon $(0,0)$.}
    \label{fig:d331-diagonal-gap}
\end{figure}
\FloatBarrier

\section{The Mixed-Parity Case}\label{sec:mixed}
\subsection{Notation}
We now assume that one cycle is even and the other is odd.
Since $D_{r,s,\ell}\cong D_{s,r,\ell}$
after appropriately relabeling the two cycles,
it is enough to treat the case in which $r$ is even and $s$ is odd.

For later convenience, write
\begin{equation}
    \alpha:=(-1)^{r/2}\rp(\gamma_r),
    \qquad
    \beta:=(-1)^{(s+1)/2}\rp(\gamma_s).
\end{equation}
Then $\alpha,\beta\in[-1,1]$.

\subsection{Simplified Coulson Formula}
\begin{lemma}\label{lem:mixed-kernel}
    Let $\Phi$ be a complex unit gain graph on $D_{r,s,\ell}$
    where $r$ is even and $s$ is odd.
    For each $k\ge 0$, define matching numbers
    \begin{equation}\label{eq:mixed-coefficient-definition}
        \begin{aligned}
            &M_k=m(D_{r,s,\ell},k),\qquad
            &T_k=m(P_{\ell-1},k-(r+s-1)/2), \\
            &R_k=m(D_{r,s,\ell}-C_r,k-r/2),\qquad
            &S_k=m(D_{r,s,\ell}-C_s,k-(s-1)/2),
        \end{aligned}
    \end{equation}
    where out-of-range matching indices are interpreted as $0$.
    For $t>0$, put polynomials
    \begin{equation}\label{eq:mixed-four-functions}
        \begin{aligned}
        Z(t)&:=\sum_{k=0}^{\lfloor n/2\rfloor} M_k t^{2k},
        &
        W(t)&:=\sum_{k=(r+s-1)/2}^{\lfloor (n-1)/2\rfloor} T_k t^{2k+1},\\
        X(t)&:=\sum_{k=r/2}^{\lfloor n/2\rfloor} R_k t^{2k},
        &
        Y(t)&:=\sum_{k=(s-1)/2}^{\lfloor (n-1)/2\rfloor} S_k t^{2k+1}.
        \end{aligned}
    \end{equation}
    Then $Z(t),X(t),Y(t),W(t)$ are positive on $(0,+\infty)$.
    If we further set
    \begin{equation}\label{eq:mixed-kernel}
        K(t;\alpha,\beta)
        =\left(Z(t)-2\alpha X(t)\right)^2+4\beta^2\left(Y(t)-2\alpha W(t)\right)^2,
    \end{equation}
    then we have
    \begin{equation}\label{eq:mixed-coulson-expression}
        E(\Phi)=\frac{1}{\pi}\int_0^{+\infty}
        \frac1{t^2}\log K(t;\alpha,\beta)\,dt.
    \end{equation}
\end{lemma}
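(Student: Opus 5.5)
The plan is to compute $t^nP_\Phi(i/t)$ directly from Theorem~\ref{thm:dumbbell-char-poly}, handling each of the four matching polynomials separately. This is the same strategy as Lemma~\ref{lem:coeff-even-even}, except that we now keep track of powers of $i$, as in Lemma~\ref{lem:odd-odd-kernel}. We then read off the real and imaginary parts.

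The basic observation is the following. For any graph $G$ on $m$ vertices, \eqref{eq:matching-polynomial-expression} together with $i^{-2j}=(-1)^j$ gives
\begin{equation}
t^m m_G(i/t)=\sum_j(-1)^j m(G,j)\,i^{m-2j}t^{2j}=i^m\sum_j m(G,j)\,t^{2j}.
\end{equation}
We apply this with $n=r+s+\ell-1$ to the four graphs in Theorem~\ref{thm:dumbbell-char-poly}, multiplying by the missing factor $t^{n-m}$ each time.
\begin{itemize}
\item For $D_{r,s,\ell}$ the computation directly gives $t^nm_{D_{r,s,\ell}}(i/t)=i^nZ(t)$.
\item For $D_{r,s,\ell}-C_r$ (on $n-r$ vertices), we obtain $t^r i^{n-r}\sum_j m(\cdot,j)t^{2j}$. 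Reindexing with $k=j+r/2$ turns this into $i^n\, i^{-r}X(t)=i^n(-1)^{r/2}X(t)$. Hence $-2a$ times it equals $-2\alpha\, i^n X(t)$.
\item For $D_{r,s,\ell}-C_s$, reindexing with $k=j+(s-1)/2$ gives $t^{2j+s}=t^{2k+1}$, so the term is $i^n\, i^{-s}Y(t)$. Since $s$ is odd, $i^{-s}=(-1)^{(s+1)/2}i$, and $-2b$ times the term equals $-2\beta\, i\cdot i^nY(t)$.
\item For $P_{\ell-1}$, reindexing with $k=j+(r+s-1)/2$ gives $t^{2j+r+s}=t^{2k+1}$ and the factor $i^{-r-s}=(-1)^{r/2}(-1)^{(s+1)/2}i$. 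So $4ab$ times this term equals $4\alpha\beta\, i\cdot i^nW(t)$.
\end{itemize}
Out-of-range indices vanish, exactly as in Lemma~\ref{lem:coeff-even-even}. The summation ranges in \eqref{eq:mixed-four-functions} are just the supports of these reindexed sums.

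Summing the four contributions yields
\begin{equation}
t^nP_\Phi(i/t)=i^n\Bigl[\bigl(Z(t)-2\alpha X(t)\bigr)-2i\beta\bigl(Y(t)-2\alpha W(t)\bigr)\Bigr].
\end{equation}
Here $Z,X,Y,W$ are real for real $t$. The squared modulus is therefore exactly $K(t;\alpha,\beta)$ in \eqref{eq:mixed-kernel}, and Theorem~\ref{thm:coulson} gives \eqref{eq:mixed-coulson-expression}.

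Positivity on $(0,+\infty)$ holds because every coefficient is a matching count, hence nonnegative, and each polynomial has a nonzero lowest-order coefficient. Specifically, $M_0=1$; $R_{r/2}=1$ and $S_{(s-1)/2}=1$ (the empty matching of the remaining graph); and $T_{(r+s-1)/2}=1$, where the convention $m_{P_0}=1$ covers the case $\ell=1$.

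The main obstacle is purely bookkeeping. One must get the parity shifts right: $t^{2k}$ versus $t^{2k+1}$, and the sign identities $i^{-r}=(-1)^{r/2}$ and $i^{-s}=(-1)^{(s+1)/2}i$. These are what make the $Y$ and $W$ terms land in the imaginary part with the common factor $\beta$. I would check the signs against a small case such as $D_{4,3,1}$.
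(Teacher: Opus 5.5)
Your proposal is correct and follows essentially the same route as the paper. Both arguments expand the four matching polynomials of Theorem~\ref{thm:dumbbell-char-poly} at $x=i/t$, reindex with $k=j+r/2$, $k=j+(s-1)/2$ and $k=j+(r+s-1)/2$, use $i^{-r}=(-1)^{r/2}$ and $i^{-s}=(-1)^{(s+1)/2}i$ to reach $t^nP_\Phi(i/t)=i^n\bigl[(Z-2\alpha X)-2i\beta(Y-2\alpha W)\bigr]$, and get positivity from nonnegative matching counts whose lowest coefficient is $1$. Your only deviation is stating the identity $t^m m_G(i/t)=i^m\sum_j m(G,j)t^{2j}$ once up front rather than repeating it inline, which is a harmless streamlining.
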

\begin{proof}
    Recall the Coulson formula in the form
    \begin{equation}\label{eq:mixed-coulson-raw}
        E(\Phi)=\frac1\pi\int_0^{+\infty} \frac1{t^2}
        \log\left|t^nP_\Phi(i/t)\right|^2\,dt.
    \end{equation}
    It suffices to compute $t^nP_\Phi(i/t)$ explicitly.

    Recall the characteristic polynomial expression
    from Theorem~\ref{thm:dumbbell-char-poly},
    \begin{equation}\label{eq:charpoly-dumbbell-2}
        P_\Phi(x)=m_{D_{r,s,\ell}}(x)
        -2a\,m_{D_{r,s,\ell}-C_r}(x)
        -2b\,m_{D_{r,s,\ell}-C_s}(x)
        +4ab\,m_{P_{\ell-1}}(x),
    \end{equation}
    where $a=\rp(\gamma_r)$ and $b=\rp(\gamma_s)$.

    We compute the four terms separately.
    First, by the definition of the matching polynomial,
    \begin{equation}
        m_{D_{r,s,\ell}}(x)
        =\sum_{j=0}^{\lfloor n/2\rfloor}(-1)^j m(D_{r,s,\ell},j)x^{n-2j}.
    \end{equation}
    Hence
    \begin{equation}
        \begin{aligned}
            t^n m_{D_{r,s,\ell}}(i/t)
            & =\sum_{j=0}^{\lfloor n/2\rfloor}(-1)^j m(D_{r,s,\ell},j)i^{n-2j}t^{2j} \\
            & =i^n\sum_{j=0}^{\lfloor n/2\rfloor} m(D_{r,s,\ell},j)t^{2j}
            =i^n\sum_{k=0}^{\lfloor n/2\rfloor}M_k t^{2k}
            =i^nZ(t).
        \end{aligned}
    \end{equation}

    Next, since $r$ is even, writing $k=j+r/2$ gives
    \begin{equation}
        m_{D_{r,s,\ell}-C_r}(x)
        =\sum_{k=r/2}^{\lfloor n/2\rfloor}(-1)^{k-r/2}R_k x^{n-2k}.
    \end{equation}
    Therefore
    \begin{equation}
        \begin{aligned}
            t^n m_{D_{r,s,\ell}-C_r}(i/t)
            & =\sum_{k=r/2}^{\lfloor n/2\rfloor}(-1)^{k-r/2}R_k i^{n-2k} t^{2k} \\
            & =i^n(-1)^{r/2}\sum_{k=r/2}^{\lfloor n/2\rfloor}R_k t^{2k}
            =i^n(-1)^{r/2}X(t).
        \end{aligned}
    \end{equation}
    Since $\alpha=(-1)^{r/2}a$, it follows that
    \begin{equation}\label{eq:mixed-r-term}
        -2a\,t^n m_{D_{r,s,\ell}-C_r}(i/t)=i^n(-2\alpha X(t)).
    \end{equation}

    Similarly, since $s$ is odd, writing $k=j+(s-1)/2$ yields
    \begin{equation}
        m_{D_{r,s,\ell}-C_s}(x)
        =\sum_{k=(s-1)/2}^{\lfloor (n-1)/2\rfloor}(-1)^{k-(s-1)/2}S_k x^{n-(2k+1)}.
    \end{equation}
    Hence
    \begin{equation}
        \begin{aligned}
            t^n m_{D_{r,s,\ell}-C_s}(i/t)
            & =\sum_{k=(s-1)/2}^{\lfloor (n-1)/2\rfloor}(-1)^{k-(s-1)/2}S_k i^{n-(2k+1)} t^{2k+1} \\
            & =i^n\left((-1)^{(s+1)/2}i\right)\sum_{k=(s-1)/2}^{\lfloor (n-1)/2\rfloor}S_k t^{2k+1}
            =i^n\left((-1)^{(s+1)/2}i\right)Y(t).
        \end{aligned}
    \end{equation}
    By $\beta=(-1)^{(s+1)/2}b$, we obtain
    \begin{equation}\label{eq:mixed-s-term}
        -2b\,t^n m_{D_{r,s,\ell}-C_s}(i/t)=i^n(-2i\beta Y(t)).
    \end{equation}

    Finally, because $r+s$ is odd and $|V(P_{\ell-1})|=\ell-1=n-r-s$,
    setting $k=j+(r+s-1)/2$ gives
    \begin{equation}
        m_{P_{\ell-1}}(x)
        =\sum_{k=(r+s-1)/2}^{\lfloor (n-1)/2\rfloor}(-1)^{k-(r+s-1)/2}T_k x^{n-(2k+1)}.
    \end{equation}
    Therefore
    \begin{equation}
        \begin{aligned}
            t^n m_{P_{\ell-1}}(i/t)
            & =\sum_{k=(r+s-1)/2}^{\lfloor (n-1)/2\rfloor}(-1)^{k-(r+s-1)/2}T_k i^{n-(2k+1)} t^{2k+1} \\
            & =i^n\left((-1)^{(r+s+1)/2}i\right)\sum_{k=(r+s-1)/2}^{\lfloor (n-1)/2\rfloor}T_k t^{2k+1} \\
            & =i^n\left((-1)^{r/2}(-1)^{(s+1)/2}i\right)W(t).
        \end{aligned}
    \end{equation}
    Using $\alpha=(-1)^{r/2}a$ and $\beta=(-1)^{(s+1)/2}b$, we get
    \begin{equation}\label{eq:mixed-t-term}
        4ab\,t^n m_{P_{\ell-1}}(i/t)=i^n(4i\alpha\beta W(t)).
    \end{equation}

    Substituting~\eqref{eq:mixed-r-term},~\eqref{eq:mixed-s-term},
    and~\eqref{eq:mixed-t-term} into~\eqref{eq:charpoly-dumbbell-2},
    we obtain
    \begin{equation}
        \begin{aligned}
            t^nP_\Phi(i/t)
            & =i^n\left(Z(t)-2\alpha X(t)-2i\beta Y(t)+4i\alpha\beta W(t)\right) \\
            & =i^n\left(Z(t)-2\alpha X(t)-2i\beta\left(Y(t)-2\alpha W(t)\right)\right),
        \end{aligned}
    \end{equation}
    Taking absolute values, we have
    \begin{equation}
        \left|t^nP_\Phi(i/t)\right|^2
        =\left(Z(t)-2\alpha X(t)\right)^2
        +4\beta^2\left(Y(t)-2\alpha W(t)\right)^2,
    \end{equation}
    which is exactly~\eqref{eq:mixed-kernel}.
    Inserting this into~\eqref{eq:mixed-coulson-raw}
    immediately proves~\eqref{eq:mixed-coulson-expression}.

    It remains to prove that
    $Z(t),X(t),Y(t),W(t)$ are all positive on $(0,+\infty)$.
    All coefficients in~\eqref{eq:mixed-four-functions}
    are matching numbers, hence are nonnegative.
    Moreover, notice that
    \begin{equation}
        M_0=1,\qquad R_{r/2}=1,\qquad S_{(s-1)/2}=1,\qquad T_{(r+s-1)/2}=1.
    \end{equation}
    Therefore, for every $t>0$,
    \begin{equation}
        Z(t)\ge 1>0,\qquad
        X(t)\ge t^r>0,\qquad
        Y(t)\ge t^s>0,\qquad
        W(t)\ge t^{r+s}>0.
    \end{equation}
    This completes the proof.
\end{proof}

\subsection{Extremal Gain Assignments for the Maximum and Minimum Energy}
The maximum and minimum energy problems in the mixed-parity case
are completely determined.

\begin{theorem}
    Let $r$ be even and $s$ be odd.
    Then the maximum of $E(\Phi)$ is attained exactly at
    \begin{equation}
        (\alpha,\beta)=(-1,1)\quad\text{or}\quad
        (\alpha,\beta)=(-1,-1).
    \end{equation}
\end{theorem}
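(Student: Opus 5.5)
We argue pointwise in $t$ and then integrate, using the kernel from Lemma~\ref{lem:mixed-kernel},
\begin{equation*}
    K(t;\alpha,\beta)=\left(Z(t)-2\alpha X(t)\right)^2+4\beta^2\left(Y(t)-2\alpha W(t)\right)^2 .
\end{equation*}
The idea is to show that for every fixed $t>0$ the function $K(t;\cdot,\cdot)$ on $[-1,1]^2$ is maximized at $\alpha=-1$, $\beta=\pm1$, and that every other point gives a strictly smaller value on a set of $t$ of positive measure. Once this is established, the monotonicity of $\log$ and formula~\eqref{eq:mixed-coulson-expression} finish the argument, exactly as in the earlier sections.

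The $\beta$-dependence is trivial. $K$ depends on $\beta$ only through $\beta^2$, and the coefficient $4(Y-2\alpha W)^2$ is nonnegative. Hence for fixed $\alpha$, $K$ is nondecreasing in $\beta^2$, so $K(t;\alpha,\beta)\le K(t;\alpha,\pm1)$. The inequality is strict whenever $|\beta|<1$ and $Y(t)-2\alpha W(t)\ne0$. Since $Y-2\alpha W$ is a nonzero polynomial in $t$ (its lowest-order term is $t^s$, because $W$ starts at $t^{r+s}$), this strictness holds for all but finitely many $t$.

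The main step is the $\alpha$-dependence at $\beta^2=1$. Write $K(t;\alpha,\pm1)=(Z-2\alpha X)^2+4(Y-2\alpha W)^2$. Since $X,Y,Z,W>0$ by Lemma~\ref{lem:mixed-kernel}, for $\alpha\in[-1,1]$ we have
\begin{equation*}
    |Z-2\alpha X|\le Z+2X=Z-2(-1)X, \qquad |Y-2\alpha W|\le Y+2W .
\end{equation*}
Equality holds in both only when $\alpha=-1$. Thus $K(t;\alpha,\beta)\le K(t;-1,\pm1)$ for all $t>0$ and all $(\alpha,\beta)$. If $\alpha>-1$, the first inequality is strict for every $t>0$, because $X(t)>0$. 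If $\alpha=-1$ but $|\beta|<1$, strictness comes from the previous paragraph, since $Y+2W>0$. In either case the integrand is strictly smaller on a set of positive measure. The two maximizers $(-1,1)$ and $(-1,-1)$ have identical kernels, so they give the same energy.

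I do not expect a serious obstacle here, since positivity of $X,Y,Z,W$ makes the triangle-inequality bound immediate. The only care needed is integrability: $\log K(t;\alpha,\beta)/t^2$ should be integrable, which holds because Coulson's formula is valid for every gain assignment. Comparing two finite integrals with a strict pointwise inequality on a positive-measure set then gives the strict energy inequality. Finally, I would translate back: $\alpha=-1$ means $\rp(\gamma_r)=-(-1)^{r/2}$, while $\beta=\pm1$ allows $\rp(\gamma_s)=\pm1$.
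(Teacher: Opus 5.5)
Your proof is correct and follows the same plan as the paper: a pointwise comparison of the kernel in $t$, first raising $\beta^2$ to $1$ (strict for all but finitely many $t$, since $Y-2\alpha W$ has lowest-order term $t^s$) and then optimizing in $\alpha$. The only deviation is in the $\alpha$-step, where your termwise bounds $|Z-2\alpha X|\le Z+2X$ and $|Y-2\alpha W|\le Y+2W$ replace the paper's strict convexity in $\alpha$ followed by the endpoint comparison $K(t;-1,\pm1)-K(t;1,\pm1)=8(ZX+4YW)>0$; your version is slightly more direct and gives strictness for $\alpha>-1$ at every $t>0$ immediately.
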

\begin{proof}
    Fix $\alpha\in[-1,1]$. From~\eqref{eq:mixed-kernel},
    \begin{equation}
        K(t;\alpha,\beta)=\left(Z(t)-2\alpha X(t)\right)^2+4\beta^2\left(Y(t)-2\alpha W(t)\right)^2
        \le K(t;\alpha,1)
    \end{equation}
    for every $t>0$, since $\beta^2\le 1$.
    From the definitions of $Y(t)$ and $W(t)$
    and the facts that $S_{(s-1)/2}=1$ and $T_{(r+s-1)/2}=1$, we have
    \begin{equation}
        Y(t)=t^s+O(t^{s+2}),\qquad W(t)=t^{r+s}+O(t^{r+s+2})
        \quad (t\to0^+).
    \end{equation}
    Since $r\ge 2$, it follows that $W(t)=o(t^s)$. Hence
    \begin{equation}
        Y(t)-2\alpha W(t)=t^s(1+o(1))\ne 0
    \end{equation}
    for all sufficiently small $t>0$,
    uniformly in $\alpha\in[-1,1]$. Therefore, if $|\beta|<1$, then
    \begin{equation}
        K(t;\alpha,\beta)<K(t;\alpha,1)
    \end{equation}
    for all sufficiently small $t>0$.
    Hence every maximum-energy point must satisfy
    $\beta^2=1\Leftrightarrow|\beta|=1$.

    It remains to maximize
    \begin{equation}
        K(t;\alpha,\pm 1)
        =\left(Z(t)-2\alpha X(t)\right)^2+4\left(Y(t)-2\alpha W(t)\right)^2.
    \end{equation}
    Expanding the right-hand side gives
    \begin{equation}
        K(t;\alpha,\pm 1)
        =4\left(X(t)^2+4W(t)^2\right)\alpha^2
        -4\left(Z(t) X(t)+4Y(t) W(t)\right)\alpha
        +Z(t)^2+4Y(t)^2.
    \end{equation}
    Since $X(t)^2+4W(t)^2>0$,
    this quadratic function of $\alpha$ is strictly convex.
    Therefore its maximum on $[-1,1]$ is attained at an endpoint.
    Now, since $Z(t),X(t),Y(t),W(t)$ are positive on $(0,+\infty)$,
    \begin{equation}
        K(t;-1,\pm1)-K(t;1,\pm1)
        =8\left(Z(t)X(t)+4Y(t)W(t)\right)>0.
    \end{equation}
    Hence for every $t>0$,
    \begin{equation}
        K(t;\alpha,\beta)\le K(t;-1,\pm1),
    \end{equation}
    with equality only when $\alpha=-1$ and $|\beta|=1$.
    This completes the proof.
\end{proof}

\begin{theorem}
    Let $r$ be even and $s$ be odd.
    Then the minimum of $E(\Phi)$ is attained exactly at
    \begin{equation}
        (\alpha,\beta)=(1,0).
    \end{equation}
\end{theorem}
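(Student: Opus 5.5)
The plan is to compare the Coulson kernels pointwise, exactly as in the previous theorem, using the representation of Lemma~\ref{lem:mixed-kernel}: it suffices to show $K(t;\alpha,\beta)\ge K(t;1,0)=(Z(t)-2X(t))^2$ for every $t>0$, with strict inequality on a set of positive measure whenever $(\alpha,\beta)\ne(1,0)$. The conclusion then follows from~\eqref{eq:mixed-coulson-expression} and the monotonicity of $\log$.

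First, since $4\beta^2(Y(t)-2\alpha W(t))^2\ge 0$, we have $K(t;\alpha,\beta)\ge (Z(t)-2\alpha X(t))^2$. The minimum over $\alpha$ of this square need not sit at $\alpha=1$ in general, because $Z-2\alpha X$ could a priori change sign. So the key step is to prove
\begin{equation}
Z(t)-2X(t)=\sum_{k}(M_k-2R_k)t^{2k}\ge 0,
\end{equation}
coefficientwise, i.e.\ $M_k\ge 2R_k$ for all $k$. This is the same combinatorial injection as in Lemma~\ref{lem:coeff-even-even}. Since $r$ is even, $C_r$ has exactly two perfect matchings. Any $(k-r/2)$-matching of $D_{r,s,\ell}-C_r$ together with either of them gives two distinct $k$-matchings of $D_{r,s,\ell}$, and different starting matchings yield different results, so $M_k\ge 2R_k$.

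Given this, for $\alpha\in[-1,1]$ and $X(t)>0$ (Lemma~\ref{lem:mixed-kernel}), we get $Z-2\alpha X\ge Z-2X\ge 0$, hence $(Z-2\alpha X)^2\ge (Z-2X)^2$. The inequality is strict for every $t>0$ when $\alpha<1$, because $Z-2\alpha X> Z-2X\ge0$. If $\alpha=1$ but $\beta\neq0$, the second term equals $4\beta^2(Y-2W)^2$. As in the maximum proof, $Y(t)-2W(t)=t^s(1+o(1))$ as $t\to0^+$, since $W=O(t^{r+s})$, so this term is strictly positive for all small $t$. In both cases $K(t;\alpha,\beta)>K(t;1,0)$ on an interval, and integrating gives $E(\alpha,\beta)>E(1,0)$.

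The main obstacle is recognizing that the naive pointwise minimization over $\alpha$ fails, and that the nonnegativity of $Z-2X$ must be supplied by the matching count. A minor technical point is that $Z-2X$ may vanish at isolated points. This is harmless, since only the inequality between the nonnegative kernels is used and the Coulson integral at $(1,0)$ is the (finite) energy.
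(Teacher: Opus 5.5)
Your proof is correct and follows essentially the same route as the paper: a pointwise kernel comparison using the matching injection $M_k\ge 2R_k$ (so $Z\ge 2X$), positivity of $X$ for strictness in $\alpha$, and the small-$t$ behaviour $Y-2\alpha W=t^s(1+o(1))$ for strictness in $\beta$. The paper eliminates $\beta$ first for arbitrary $\alpha$ and then minimizes $(Z-2\alpha X)^2$, whereas you treat $\alpha$ first, and your worry about zeros of $Z-2X$ is unnecessary because $M_0=1$ and $R_0=0$ give $Z(t)-2X(t)\ge 1$ for all $t>0$.
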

\begin{proof}
    Fix $\alpha\in[-1,1]$. From~\eqref{eq:mixed-kernel},
    \begin{equation}
        K(t;\alpha,\beta)=\left(Z(t)-2\alpha X(t)\right)^2+4\beta^2\left(Y(t)-2\alpha W(t)\right)^2
        \ge K(t;\alpha,0)
    \end{equation}
    for every $t>0$,
    and as in the proof of the maximum energy statement,
    the inequality is strict for every $\beta\ne 0$
    and all sufficiently small $t$.
    Hence every minimizer must satisfy $\beta=0$.

    It remains to minimize
    \begin{equation}
        K(t;\alpha,0)=\left(Z(t)-2\alpha X(t)\right)^2.
    \end{equation}
    Each matching of $D_{r,s,\ell}-C_r$
    has at least two extensions to a matching of $D_{r,s,\ell}$,
    obtained by choosing one of the two perfect matchings of $C_r$.
    Therefore
    \begin{equation}
        m(D_{r,s,\ell},k)\ge 2m(D_{r,s,\ell}-C_r,k-r/2)
    \end{equation}
    for all $k$. This implies $Z(t)\ge 2X(t)$ for every $t>0$.
    Hence, for every $\alpha\le 1$,
    \begin{equation}
        Z(t)-2\alpha X(t)\ge Z(t)-2X(t)\ge 0,
    \end{equation}
    so
    \begin{equation}
        K(t;\alpha,0)\ge K(t;1,0).
    \end{equation}
    The inequality is strict when $\alpha<1$, because $X(t)>0$.
    This completes the proof.
\end{proof}

\section{Conclusion}\label{sec:conclusion}
The extremal energy problem for complex unit gain dumbbell graphs
can be summarized in Table~\ref{tab:summary}.

\begin{table}[htbp]
\centering
\small
\setlength{\tabcolsep}{6pt}
\renewcommand{\arraystretch}{1.5}

\begin{tabularx}{\linewidth}{|C{2.6cm}|Y|Y|Y|}
    \hline
    \textbf{Case} & \textbf{Parameters} & \textbf{Maximum energy} & \textbf{Minimum energy} \\
    \hline
    $r,s$ even
    &
    $\begin{aligned}
    \alpha&=(-1)^{r/2}\rp(\gamma_r),\\
    \beta&=(-1)^{s/2}\rp(\gamma_s)
    \end{aligned}$
    &
    attained exactly at \newline
    $(\alpha,\beta)=(-1,-1)$
    &
    attained exactly at \newline
    $(\alpha,\beta)=(1,1)$
    \\
    \hline
    $r,s$ odd, $\ell$ even
    &
    \multirow{2}{=}{\centering
    $\begin{aligned}
    \alpha&=(-1)^{(r+1)/2}\rp(\gamma_r),\\
    \beta&=(-1)^{(s+1)/2}\rp(\gamma_s)
    \end{aligned}$}
    &
    attained exactly at \newline
    $(\alpha,\beta)=(1,1)$ or $(\alpha,\beta)=(-1,-1)$
    &
    attained exactly at \newline
    $(\alpha,\beta)=(0,0)$
    \\
    \cline{1-1}\cline{3-4}
    $r,s$ odd, $\ell$ odd
    &
    &
    attained exactly at \newline
    $(\alpha,\beta)=(1,-1)$ or $(\alpha,\beta)=(-1,1)$
    &
    not yet fully classified
    \\
    \hline
    $r$ even, $s$ odd
    &
    $\begin{aligned}
    \alpha&=(-1)^{r/2}\rp(\gamma_r),\\
    \beta&=(-1)^{(s+1)/2}\rp(\gamma_s)
    \end{aligned}$
    &
    attained exactly at \newline
    $(\alpha,\beta)=(-1,1)$ or $(\alpha,\beta)=(-1,-1)$
    &
    attained exactly at \newline
    $(\alpha,\beta)=(1,0)$
    \\
    \hline
    \end{tabularx}

    \caption{Summary of the extremal energy problem
    for complex unit gain dumbbell graphs $D_{r,s,\ell}$.
    Here $\gamma_r=\vphi(C_r)$ and $\gamma_s=\vphi(C_s)$
    denote the gains of the two cycles.}
    \label{tab:summary}
\end{table}
\FloatBarrier

The argument of the comparison of the coefficients
in the even-even case
also suggests a natural extension
to complex unit gain cactus graphs
with several pairwise vertex-disjoint even cycles.
We leave this as a possible direction of future work.

\bibliographystyle{elsarticle-num}
\bibliography{ref}

@misc{Gutman1978,
    author = {Gutman, Ivan},
    title = {The energy of a graph},
    year = {1978},
    language = {English},
    howpublished = {Ber. {Math}.-{Stat}. {Sekt}. {Forschungszent}. {Graz} 103, 22 {S}.},
    zbMATH = {3623599},
    Zbl = {0402.05040}
}

@article{Samanta2025,
	title = {Bounds and extremal graphs for the energy of complex unit gain graphs},
	volume = {721},
	issn = {00243795},
	_url = {https://linkinghub.elsevier.com/retrieve/pii/S0024379524001241},
	doi = {10.1016/j.laa.2024.03.028},
	language = {en},
	urldate = {2026-03-21},
	journal = {Linear Algebra and its Applications},
	author = {Samanta, Aniruddha and Rajesh Kannan, M.},
	month = sep,
	year = {2025},
	pages = {844--866},
}

@article{Samanta2021,
	title = {Bounds for the energy of a complex unit gain graph},
	volume = {612},
	issn = {00243795},
	_url = {https://linkinghub.elsevier.com/retrieve/pii/S0024379520305449},
	doi = {10.1016/j.laa.2020.11.019},
	language = {en},
	urldate = {2026-03-21},
	journal = {Linear Algebra and its Applications},
	author = {Samanta, Aniruddha and Kannan, M. Rajesh},
	month = mar,
	year = {2021},
	pages = {1--29},
}

@article{Zaslavsky1989,
	title = {Biased graphs. {I}. {Bias}, balance, and gains},
	volume = {47},
	copyright = {https://www.elsevier.com/tdm/userlicense/1.0/},
	issn = {00958956},
	_url = {https://linkinghub.elsevier.com/retrieve/pii/0095895689900634},
	doi = {10.1016/0095-8956(89)90063-4},
	language = {en},
	number = {1},
	urldate = {2025-12-29},
	journal = {Journal of Combinatorial Theory, Series B},
	author = {Zaslavsky, Thomas},
	month = aug,
	year = {1989},
	pages = {32--52},
}

@article{Harary1953,
    author = {Frank Harary},
    title = {{On the notion of balance of a signed graph.}},
    volume = {2},
    journal = {Michigan Mathematical Journal},
    number = {2},
    publisher = {University of Michigan, Department of Mathematics},
    pages = {143--146},
    year = {1953},
    doi = {10.1307/mmj/1028989917},
    _URL = {https://doi.org/10.1307/mmj/1028989917}
}

@article{Bhat2017,
	title = {Unicyclic signed graphs with minimal energy},
	volume = {226},
	issn = {0166218X},
	_url = {https://linkinghub.elsevier.com/retrieve/pii/S0166218X17301518},
	doi = {10.1016/j.dam.2017.03.015},
	language = {en},
	urldate = {2026-03-21},
	journal = {Discrete Applied Mathematics},
	author = {Bhat, Mushtaq A. and Pirzada, S.},
	month = jul,
	year = {2017},
	pages = {32--39},
}

@article{Bhat2018,
	title = {Bicyclic signed graphs with minimal and second minimal energy},
	volume = {551},
	issn = {00243795},
	_url = {https://linkinghub.elsevier.com/retrieve/pii/S0024379518301782},
	doi = {10.1016/j.laa.2018.03.047},
	language = {en},
	urldate = {2026-03-21},
	journal = {Linear Algebra and its Applications},
	author = {Bhat, Mushtaq A. and Samee, U. and Pirzada, S.},
	month = aug,
	year = {2018},
	pages = {18--35},
}

@article{Wang2020,
	title = {Research on Extreme Signed Graphs with Minimal Energy in Tricyclic Signed Graphs {$S(n,n+2)$}},
	volume = {2020},
	copyright = {http://creativecommons.org/licenses/by/4.0/},
	issn = {1076-2787, 1099-0526},
	_url = {https://www.hindawi.com/journals/complexity/2020/8747684/},
	doi = {10.1155/2020/8747684},
	language = {en},
	urldate = {2026-03-21},
	journal = {Complexity},
	author = {Wang, Yajing and Gao, Yubin},
	month = aug,
	year = {2020},
	pages = {1--8},
}

@article{Reff2012,
	title = {Spectral properties of complex unit gain graphs},
	volume = {436},
	copyright = {https://www.elsevier.com/tdm/userlicense/1.0/},
	issn = {00243795},
	_url = {https://linkinghub.elsevier.com/retrieve/pii/S002437951100718X},
	doi = {10.1016/j.laa.2011.10.021},
	language = {en},
	number = {9},
	urldate = {2025-12-18},
	journal = {Linear Algebra and its Applications},
	author = {Reff, Nathan},
	month = may,
	year = {2012},
	pages = {3165--3176},
}

@article{Mateljevic2010,
	title = {Energy of a polynomial and the {Coulson} integral formula},
	volume = {48},
	copyright = {http://www.springer.com/tdm},
	issn = {0259-9791, 1572-8897},
	_url = {http://link.springer.com/10.1007/s10910-010-9725-z},
	doi = {10.1007/s10910-010-9725-z},
	language = {en},
	number = {4},
	urldate = {2026-03-22},
	journal = {Journal of Mathematical Chemistry},
	author = {Mateljević, Miodrag and Božin, Vladimir and Gutman, Ivan},
	month = nov,
	year = {2010},
	pages = {1062--1068},
}

@article{Wang2010,
	title = {Spectral Characterizations of Dumbbell Graphs},
	volume = {17},
	issn = {1077-8926},
	_url = {https://www.combinatorics.org/ojs/index.php/eljc/article/view/v17i1r42},
	doi = {10.37236/314},
	language = {en},
	number = {1},
	urldate = {2026-03-22},
	journal = {The Electronic Journal of Combinatorics},
	author = {Wang, Jianfeng and Belardo, Francesco and Huang, Qiongxiang and Marzi, Enzo M. Li},
	month = mar,
	year = {2010},
	pages = {R42},
}

@article{Reff2016,
	title = {Oriented gain graphs, line graphs and eigenvalues},
	volume = {506},
	issn = {00243795},
	_url = {https://linkinghub.elsevier.com/retrieve/pii/S0024379516302154},
	doi = {10.1016/j.laa.2016.05.040},
	language = {en},
	urldate = {2025-12-27},
	journal = {Linear Algebra and its Applications},
	author = {Reff, Nathan},
	month = oct,
	year = {2016},
	pages = {316--328},
}

@article{Mehatari2022,
	title = {On the adjacency matrix of a complex unit gain graph},
	volume = {70},
	issn = {0308-1087, 1563-5139},
	_url = {https://www.tandfonline.com/doi/full/10.1080/03081087.2020.1776672},
	doi = {10.1080/03081087.2020.1776672},
	language = {en},
	number = {9},
	urldate = {2025-12-18},
	journal = {Linear and Multilinear Algebra},
	author = {Mehatari, Ranjit and Kannan, M. Rajesh and Samanta, Aniruddha},
	month = jun,
	year = {2022},
	pages = {1798--1813},
}

@article{Gutman1979,
	author = {Ivan Gutman},
	title = {The Matching Polynomial},
	journal = {MATCH Communications in Mathematical and in Computer Chemistry},
	volume = {6},
	pages = {75--91},
	year = {1979},
	_url = {https://match.pmf.kg.ac.rs/electronic_versions/Match06/match6_75-91.pdf}
}

@article{Cigler2003,
	title = {{$q$}-{Fibonacci} Polynomials},
	volume = {41},
	issn = {0015-0517, 2641-340X},
	_url = {https://www.tandfonline.com/doi/full/10.1080/00150517.2003.12428602},
	doi = {10.1080/00150517.2003.12428602},
	language = {en},
	number = {1},
	urldate = {2026-03-22},
	journal = {The Fibonacci Quarterly},
	author = {Cigler, Johann},
	month = feb,
	year = {2003},
	pages = {31--40},
}

@article{Gutman1986,
	author = {Ivan Gutman},
	title = {Some Relations for the {$\mu$}-Polynomial},
	journal = {MATCH Communications in Mathematical Chemistry},
	volume = {19},
	pages = {127--137},
	year = {1986},
	_url = {https://match.pmf.kg.ac.rs/electronic_versions/Match19/match19_127-137.pdf}
}

@article{Wissing2025,
	title = {Symmetry in complex unit gain graphs and their spectra},
	volume = {722},
	issn = {00243795},
	_url = {https://linkinghub.elsevier.com/retrieve/pii/S0024379525002289},
	doi = {10.1016/j.laa.2025.05.012},
	language = {en},
	urldate = {2025-12-18},
	journal = {Linear Algebra and its Applications},
	author = {Wissing, Pepijn and Van Dam, Edwin R.},
	month = oct,
	year = {2025},
	pages = {164--177},
}

@article{Samanta2024,
  author = {Samanta, Aniruddha and Kannan, M. Rajesh},
  title = {On the spectrum of complex unit gain graph},
  journal = {Journal of the Ramanujan Mathematical Society},
  volume = {39},
  number = {2},
  year = {2024},
  pages = {131--142},
  doi = {10.48550/arXiv.1908.10668}
}

@article{Belardo2023,
	title = {{NEPS} of complex unit gain graphs},
	volume = {39},
	issn = {1081-3810},
	_url = {https://journals.uwyo.edu/index.php/ela/article/view/8015},
	doi = {10.13001/ela.2023.8015},
	language = {en},
	urldate = {2026-03-22},
	journal = {The Electronic Journal of Linear Algebra},
	author = {Belardo, Francesco and Brunetti, Maurizio and Khan, Suliman},
	month = dec,
	year = {2023},
	pages = {621--643}
}

@article{Furtula2008,
	title = {Bicyclic molecular graphs with the greatest energy},
	volume = {73},
	copyright = {http://creativecommons.org/licenses/by-nc-nd/4.0/},
	issn = {0352-5139, 1820-7421},
	_url = {https://doiserbia.nb.rs/Article.aspx?ID=0352-51390804431F},
	doi = {10.2298/JSC0804431F},
	language = {en},
	number = {4},
	urldate = {2026-03-23},
	journal = {Journal of the Serbian Chemical Society},
	author = {Furtula, Boris and Radenkovic, Slavko and Gutman, Ivan},
	year = {2008},
	pages = {431--433},
}

\end{document}